\theoremstyle{plain}
\newtheorem{thm}{Theorem}[section]
\newtheorem{theorem}[thm]{Theorem}
\newtheorem{lemma}[thm]{Lemma}
\newtheorem{corollary}[thm]{Corollary}
\newtheorem{cor}[thm]{Corollary}
\newtheorem{proposition}[thm]{Proposition}
\theoremstyle{remark}
\newtheorem{remark}[thm]{Remark}
\newtheorem{example}[thm]{Example}
\theoremstyle{definition}
\newtheorem{definition}[thm]{Definition}
\crefname{section}{Section}{Sections}
\crefname{thm}{Theorem}{Theorems}
\Crefname{thm}{Theorem}{Theorems}
\crefname{theorem}{Theorem}{Theorems}
\Crefname{theorem}{Theorem}{Theorems}
\crefname{lemma}{Lemma}{Lemmas}
\Crefname{lemma}{Lemma}{Lemmas}
\crefname{corollary}{Corollary}{Corollaries}
\Crefname{corollary}{Corollary}{Corollaries}
\crefname{cor}{Corollary}{Corollaries}
\Crefname{cor}{Corollary}{Corollaries}
\crefname{proposition}{Proposition}{Propositions}
\Crefname{proposition}{Proposition}{Propositions}
\crefname{prop}{Proposition}{Propositions}
\Crefname{prop}{Proposition}{Propositions}
\crefname{remark}{Remark}{Remarks}
\Crefname{remark}{Remark}{Remarks}
\crefname{rem}{Remark}{Remarks}
\Crefname{rem}{Remark}{Remarks}
\crefname{example}{Example}{Examples}
\Crefname{Example}{Example}{Examples}
\crefname{exercise}{Exercise}{Exercises}
\Crefname{exercise}{Exercise}{Exercises}
\crefname{definition}{Definition}{Definitions}
\Crefname{definition}{Definition}{Definitions}
\crefname{dfn}{Definition}{Definitions}
\Crefname{dfn}{Definition}{Definitions}
\crefname{algorithm}{Algorithm}{Algorithms}
\Crefname{algorithm}{Algorithm}{Algorithms}
\crefname{question}{Question}{Questions}
\Crefname{question}{Question}{Questions}
\crefname{problem}{Problem}{Problems}
\Crefname{problem}{Problem}{Problems}
\crefname{notation}{Notation}{Notations}
\Crefname{notation}{Notation}{Notations}
\crefname{conjecture}{Conjecture}{Conjectures}
\Crefname{conjecture}{Conjecture}{Conjectures}
\crefname{conj}{Conjecture}{Conjectures}
\Crefname{conj}{Conjecture}{Conjectures}
\crefname{condition}{Condition}{Conditions}
\Crefname{condition}{Condition}{Conditions}
\newcommand{\ZZ}{\mathbb{Z}}
\newcommand{\RR}{\mathbb{R}}
\newcommand{\ee}{\boldsymbol{e}}
\newcommand{\conv}{\operatorname{conv}}
\newcommand{\rank}{\operatorname{rank}}
\newcommand{\KR}{\operatorname{KR}}
\newcommand{\SE}{\operatorname{SE}}
\newcommand{\DE}{\operatorname{DE}}
\newcommand{\aff}{\operatorname{aff}}
\newcommand{\ver}{\operatorname{vert}}
\DeclareSymbolFont{symbolsC}{U}{txsyc}{m}{n}
\DeclareMathSymbol{\MYPerp}{\mathrel}{symbolsC}{121}
\newcommand{\numof}[1]{\left|#1\right|}
\newcommand{\defit}[1]{\emph{#1}}
\newcommand{\ideal}[2]{%
\left(#1 \mathrel{}\middle|\mathrel{}  #2\right)
}
\begin{document}

\title[Faces of Directed Edge Polytopes]{Faces of Directed Edge Polytopes}

\author[Y. Numata]{Yasuhide NUMATA}
\address[Y. Numata]{Department of Mathematics, Shinshu University, Matsumoto, Japan.}
\thanks{The first author was partially supported by JSPS KAKENHI Grant Number JP18K03206.}
% \curraddr{}
\email{nu@math.shinshu-u.ac.jp}
% \urladdr{}

\author[Y. Takahashi]{Yusuke TAKAHASHI}
\address[Y. Takahashi]{Graduate School of Science and Technology, Shinshu University, Matsumoto, Japan.}
% \thanks{}
% \curraddr{}
% \email{}
% \urladdr{}

\author[D. Tamaki]{Dai Tamaki}
\address[D. Tamaki]{Department of Mathematics, Shinshu University, Matsumoto, Japan.}
 \thanks{The third author was partially supported by JSPS KAKENHI Grant
 Number JP20K03579}
% \curraddr{}
\email{rivulus@shinshu-u.ac.jp}
% \urladdr{}

\keywords{Kantorovich--Rubinstein polytopes; fundamental polytopes; symmetric edge polytopes; root polytopes; characterization of faces; $f$-vectors}
\subjclass[2020]{52B05}

\begin{abstract}
 Given a finite quiver (directed graph) without loops and
 multiedges, the convex hull of the column vector of the incidence
 matrix is called the directed edge polytope and is an interesting
 example of lattice polytopes. 
 In this paper, we give a complete characterization of facets of the
 directed edge polytope of an arbitrary finite quiver without loops and
 multiedges in terms of the connectivity and the existence of a rank
 function. Our result can be regarded as an extension of the result of
 Higashitani et al.~\cite{MR3949939} on facets of symmetric edge
 polytopes to directed edge polytopes.
 When the quiver in question has a rank function, we obtain a
 characterization of faces of arbitrary dimensions.
\end{abstract}

\maketitle

% Introduction
% !TeX root =./x2.tex
% !TeX program = pdfLaTeX
\section{Introduction}

Motivated by optimal transportation problems, Vershik \cite{MR3331969}
proposed to study the convex polytope $\KR(X,d)$ constructed from a finite
metric space $(X,d)$. When $X=\{1,\ldots,n\}$, it is defined by
\[
 \KR(X,d) = \conv\ideal{\textstyle \frac{\ee_{i}-\ee_j}{d(i,j)}}{1\le
 i,j\le n},
\]
where $\{\ee_{1},\ldots,\ee_{n}\}$ is the standard orthonormal
basis of $\RR^{n}$.
This is called the \emph{fundamental polytope} in Vershik's paper. It is
also called the \emph{Kantorovich--Rubinstein polytope}
\cite{MR3810570,MR4249903}.
When $X$ is a tree-like metric space, Delucchi and Hoessley
\cite{MR4081480} proved a nice formula of the $f$-vector by using the
relation between tree-like metric spaces and hyperplane arrangements.
The starting point of this work was second author's attempt to extend
their work to graphs with cycles.

Given a finite simple graph $G$, define a metric $d_{\mathrm{graph}}$ on
the vertex set $G_{0}$ by the shortest length of paths, where each edge
is equipped with length one.
It turns out that the Kantorovich--Rubinstein polytope of
the metric space $(G_{0},d_{\mathrm{graph}})$ has already been studied
under different names.
It coincides with the \emph{symmetric edge polytope} $\SE(G)$ introduced
by Matsui et al.~\cite{MR2842918}.
When $G$ is the complete graph $K_{n}$, it is called the \emph{root
polytope} of the root system $A_{n}$ and its faces are completely
determined by Cho \cite{MR1697418}.

We may also generalize the definition of symmetric edge polytope to
a finite quiver (directed graph) $Q=(Q_{0},Q_{1})$ without loops and
multiedges by 
\[
 \DE(Q) = \conv\ideal{\varepsilon_{(i,j)}=\ee_{i}-\ee_{j}}{(i,j)\in Q_{1}}.
\]
Here $Q_{0}$ is the set of vertices and the set of edges $Q_{1}$ is
regarded as a subset of $Q_{0}\times Q_{0}$.
The polytope $\mathrm{DE}(Q)$ is called the \emph{directed edge
polytope} of $Q$ in \cite{MR3395979}.
The symmetric edge polytope $\SE(G)$ is nothing but $\DE(D(G))$,
where $D(G)$ is the double of $G$, i.e.\ the quiver obtained from $G$ by
replacing each edge $v - w$ by two directed edges $v\to w$ and
$v\leftarrow w$.

The aim of this paper is to find an explicit combinatorial description
of all facets of $\DE(Q)$ and thus obtain combinatorial descriptions of
facets of $\SE(G)=\KR(G_{0},d_{\mathrm{graph}})$ for a finite simple
graph. 

In general, the directed edge polytope $\DE(Q)$ is defined as a convex
polytope in the vector space $\RR^{Q_{0}}=\{\rho:Q_{0}\to\RR\}$.
Since the vertex set of $\DE(Q)$ is given by
$\Set{\varepsilon_{(v,w)}|(v,w)\in Q_{1}}$, any face of $\DE(Q)$ can be
written in the form $\DE(R)$ for a subquiver $R$ with
$R_{0}=Q_0$.  
Let us call such a subquiver a \emph{lluf subquiver}.

Given a lluf subquiver $R$ of $Q$, our problem is thus to
determine when $\DE(R)$ is a facet of $\DE(Q)$, i.e.\ 
$\DE(R)$ is a face of $\DE(Q)$ and
$\dim\DE(R)=\dim\DE(Q)-1$.  
It turns out that the existence of a \emph{rank function}, i.e.~a
function $\rho:Q_{0}\to\RR$ satisfying $\rho(v)+1=\rho(w)$ for any edge
$v\to w$, plays a key role in both problems.
As we see in \cref{prop:fuga}, such a function makes the
vertex set $Q_{0}$ into a graded poset. 

\begin{theorem}
\label{thm:dim}
 For a finite quiver $Q$ without loops and multiedges, we have
 \[
 \dim(\DE(Q))=
  \begin{cases}
   \numof{Q_{0}}-\numof{\pi_{0}(Q)}-1 & (\text{if $Q$ has a rank
   function}) \\ 
   \numof{Q_{0}}-\numof{\pi_{0}(Q)} & (\text{otherwise}),
\end{cases}
 \]
 where $\pi_{0}(Q)$ is the set of connected components of $Q$. 
\end{theorem}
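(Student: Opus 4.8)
The plan is to compute the dimension of $\DE(Q)$ by identifying the affine span of the vertex set $\{\varepsilon_{(v,w)} \mid (v,w) \in Q_1\}$ inside $\RR^{Q_0}$, working one connected component at a time. First I would reduce to the connected case: since the vertex sets of distinct connected components of $Q$ live in complementary coordinate subspaces of $\RR^{Q_0}$, the polytope $\DE(Q)$ is an affine product (join-like direct sum) of the polytopes $\DE(Q^{(c)})$ over the components $Q^{(c)}$, so $\dim \DE(Q) = \sum_c \dim \DE(Q^{(c)})$. If each connected component of size $n_c$ contributes $n_c - 2$ when it has a rank function and $n_c - 1$ when it does not, summing over the $\numof{\pi_0(Q)}$ components yields exactly $\numof{Q_0} - \numof{\pi_0(Q)} - (\text{number of components with a rank function}) $; and the point is that \emph{either every component has a rank function or we are in the "otherwise" case for the whole quiver}, so the count of rank-admitting components is either $\numof{\pi_0(Q)}$ (giving the first case) or strictly less — but actually we must be slightly careful here, and the cleanest route is to treat connected $Q$ and then note that $Q$ has a rank function iff each component does, handling the mixed situation by observing it still falls under "otherwise." I would phrase the component reduction so that the two cases of the theorem drop out directly.

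Now assume $Q$ is connected with $\numof{Q_0} = n$. The key linear-algebra observation is that the vectors $\varepsilon_{(v,w)} = \ee_v - \ee_w$ all lie in the hyperplane $H = \{\rho \in \RR^{Q_0} \mid \sum_v \rho(v) = 0\}$, so $\DE(Q) \subseteq H$ and hence $\dim \DE(Q) \le n - 1$ always. To pin down the affine span $A$ of the $\varepsilon_{(v,w)}$, I would use connectivity of the underlying graph: choosing a spanning tree, the differences $\ee_v - \ee_w$ along its edges already span the full linear space $H$ as a \emph{linear} space, hence the $\varepsilon_{(v,w)}$ linearly span $H$, which is $(n-1)$-dimensional. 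So the question is whether $0 \in A$, i.e.\ whether the \emph{affine} span is all of $H$ (dimension $n-1$) or a proper affine hyperplane in $H$ (dimension $n-2$). The affine span is a proper hyperplane in $H$ exactly when there is a linear functional $\ell$ on $\RR^{Q_0}$, not constant on $Q_0$-coordinates in the trivial way, with $\ell(\varepsilon_{(v,w)}) = \ell(\ee_v) - \ell(\ee_w) = 1$ for every edge $(v,w) \in Q_1$ — and writing $\rho(v) := -\ell(\ee_v)$, this is precisely the condition $\rho(v) + 1 = \rho(w)$ for all edges, i.e.\ $\rho$ is a rank function. (If such $\rho$ exists it is automatically non-constant, since $Q$ has at least one edge — recall $Q$ has no loops, and a connected quiver on $\ge 2$ vertices has an edge; the case $n = 1$ gives $\DE(Q) = \emptyset$ or a point and must be checked separately, where the formula reads $0 - 1 - 1$ or $0 - 1$, so one should either exclude isolated vertices from "components" or verify the degenerate arithmetic.) Thus $\dim \DE(Q) = n - 2$ iff $Q$ has a rank function, and $n - 1$ otherwise, which is the connected case of the theorem.

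The main obstacle I expect is not the linear algebra — which is essentially the standard computation of the rank of an incidence matrix — but the bookkeeping in the reduction to connected components, specifically making the statement and the degenerate low-cardinality cases line up. One has to be careful that "$Q$ has a rank function" for a disconnected $Q$ is equivalent to "each component has one" (true, since a rank function can be defined independently on each component, shifting by an arbitrary constant), and then that the dimension formula, summed over components, produces exactly the claimed closed form only when interpreted correctly in the mixed case where some components have rank functions and others do not. I would resolve this by proving the connected case as a clean lemma and then stating the component decomposition of $\DE(Q)$ as an affine direct sum, after which the general formula is pure arithmetic; any residual subtlety about isolated vertices I would absorb into conventions fixed at the start (e.g.\ an isolated vertex contributes an empty subpolytope, and we may reduce to quivers with no isolated vertices, or equivalently check the arithmetic directly).
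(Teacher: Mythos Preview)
Your connected-case analysis is correct and matches the paper's in spirit: the paper also uses a spanning tree (more generally, a spanning polyforest) to show that the $\varepsilon_{(v,w)}$ linearly span a space of dimension $c(Q)=\numof{Q_0}-\numof{\pi_0(Q)}$, and that the affine span drops by one precisely when a rank function exists.

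The genuine gap is in your reduction to connected components. The formula $\dim \DE(Q) = \sum_c \dim \DE(Q^{(c)})$ is false. Take $Q$ to be two disjoint edges $1\to 2$ and $3\to 4$: then $\DE(Q)$ is the segment from $\ee_1-\ee_2$ to $\ee_3-\ee_4$, of dimension $1$, while each component contributes $\dim\DE(Q^{(c)})=0$; the theorem correctly gives $4-2-1=1$. Your own arithmetic already exposes the mismatch: you obtain $\numof{Q_0}-\numof{\pi_0(Q)}-(\text{number of components with a rank function})$, which for $k$ components all admitting rank functions is $\numof{Q_0}-2k$, not the $\numof{Q_0}-k-1$ the theorem asserts. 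The reason is that the convex hull of point sets lying in complementary coordinate subspaces is a free sum (dimensions add) only when the origin lies in each affine hull; a component with a rank function has $0\notin\aff(\DE(Q^{(c)}))$, and in that case the construction behaves like a join, picking up an extra $+1$ for every component after the first. Tracking these $+1$'s carefully does recover the theorem, but it is exactly the ``bookkeeping'' you flagged, and it is not mere arithmetic.

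The paper sidesteps this entirely by never decomposing into components. It proves the upper bound globally via the hyperplanes $\kappa_{R_0}^\perp$ (one per component) together with the hyperplane $H_\rho$ when a rank function exists, and the lower bound via a single spanning polyforest $F$ of $Q$, using the rank of the incidence matrix to get $\dim\DE(F)=c(Q)-1$ and then, in the no-rank-function case, exhibiting one edge of $Q$ whose $\varepsilon_{(v,w)}$ lies outside $\aff(\DE(F))$. You can repair your approach by doing the join arithmetic correctly, but the global spanning-forest argument is cleaner and avoids the case analysis over mixed components.
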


%Let us denote $c(Q)=\numof{Q_{0}}-\numof{\pi_{0}(Q)}$ and call it the
%\emph{coconnectivity} of $Q$. 
It should be noted that, even if $Q$ is connected, a subquiver $R$
representing a facet of $\DE(Q)$ as $\DE(R)$ might not be connected.
In fact, the number of connected components is another key player in our
work. 

\begin{theorem}
 \label{thm:facet}
 Let $Q$ be a finite quiver without loops and multiedges.
 For a lluf subquiver $R$ of $Q$ with
 $\dim(\DE(R))=\dim(\DE(Q))-1$, 
 $\DE(R)$ is a facet of $\DE(Q)$ if and only if one of the
 following conditions holds:
 \begin{enumerate}
  \item \label{condition:hogera}
	$\numof{\pi_{0}(R)}=\numof{\pi_{0}(Q)}+1$, each connected
	component of $R$ is a full  
	subquiver of $Q$, and the contraction of $R$ in $Q$
	(\Cref{def:contraction})   
	$Q/R$ is acyclic.
  \item \label{condition:piyo} $\numof{\pi_{0}(R)}=\numof{\pi_{0}(Q)}$
	and there exists a rank function $\rho$ of $R$ such that
	\[
	 (\rho(v)-\rho(w)+1)(\rho(v')-\rho(w')+1)>0
	\]
	for any $(v,w),(v',w')\in Q_{1}\setminus R_{1}$.
 \end{enumerate} 
\end{theorem}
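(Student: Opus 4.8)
The plan is to reduce the statement to a face-recognition question and then extract the combinatorics from \Cref{thm:dim}. Since $\dim(\DE(R))=\dim(\DE(Q))-1$ is assumed (the degenerate case $\dim(\DE(Q))=0$ being immediate), $\DE(R)$ is a facet of $\DE(Q)$ exactly when it is a face of $\DE(Q)$ at all: a facet is a face, and a proper face of codimension one is a facet. So the task is to decide when the lluf subquiver $R$ represents a face of $\DE(Q)$ and to show that this happens precisely under the geometric content of \eqref{condition:hogera} or \eqref{condition:piyo}; the connectivity clauses in those conditions will then fall out of \Cref{thm:dim}.

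For the face-recognition step I would argue as follows. As noted in the introduction, the vertices of $\DE(Q)$ are precisely the points $\varepsilon_{(v,w)}$ with $(v,w)\in Q_1$. Hence $\DE(R)$ is a face of $\DE(Q)$ if and only if there are a function $\rho\colon Q_0\to\RR$ and a scalar $M$ with $\rho(v)-\rho(w)=M$ for every $(v,w)\in R_1$ and $\rho(v)-\rho(w)<M$ for every $(v,w)\in Q_1\setminus R_1$; indeed, the linear functional $x\mapsto\langle\rho,x\rangle$ then attains its maximum $M$ over $\DE(Q)$ exactly at the vertices $\varepsilon_{(v,w)}$ with $(v,w)\in R_1$, and conversely any supporting hyperplane cutting out $\DE(R)$ has this form. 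I would split on the sign of $M$. If $M=0$, then $\rho$ is constant on each connected component of $R$; the strict inequalities force each component of $R$ to be a full subquiver of $Q$ (an edge of $Q$ lying inside a component but not in $R$ would give $0<0$), and $\rho$ descends to a real-valued function on $\pi_0(R)$ strictly increasing along the edges of $Q/R$; conversely, given fullness, such a descended function, hence a supporting $\rho$ with $M=0$, exists precisely when $Q/R$ is acyclic. If $M\neq 0$, then $\tau:=-\rho/M$ is a rank function of $R$, and the inequalities $\rho(v)-\rho(w)<M$ over the edges in $Q_1\setminus R_1$ become precisely the assertion that the numbers $\tau(v)-\tau(w)+1$ over those edges are nonzero and share a common sign --- i.e.\ all their pairwise products are positive, which is the inequality in \eqref{condition:piyo}; running this backwards gives the converse. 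In sum, $\DE(R)$ is a face of $\DE(Q)$ if and only if \emph{either} each component of $R$ is a full subquiver of $Q$ and $Q/R$ is acyclic (call this type~I) \emph{or} $R$ admits a rank function $\rho$ with $(\rho(v)-\rho(w)+1)(\rho(v')-\rho(w')+1)>0$ for all $(v,w),(v',w')\in Q_1\setminus R_1$ (type~II); these are \eqref{condition:hogera} and \eqref{condition:piyo} with their connectivity clauses deleted.

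It then remains to recover those clauses from the codimension-one hypothesis. Since a facet is a proper face, $R_1\subsetneq Q_1$. Suppose type~I holds. Then some edge of $Q$ joins two distinct components of $R$, so $|\pi_0(Q)|<|\pi_0(R)|$; moreover $R$ has a rank function if and only if $Q$ does --- one direction is restriction, while if $R$ had a rank function and $Q$ did not, then \Cref{thm:dim} applied to both quivers, together with $\dim(\DE(R))=\dim(\DE(Q))-1$, would yield $|\pi_0(R)|=|\pi_0(Q)|$, contradicting the previous inequality. So the same branch of \Cref{thm:dim} governs $R$ and $Q$, the dimension hypothesis becomes $|\pi_0(R)|=|\pi_0(Q)|+1$, and \eqref{condition:hogera} holds. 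Suppose instead type~II holds, so $R$ has a rank function $\rho$. If $Q$ has no rank function, \Cref{thm:dim} and the dimension hypothesis at once give $|\pi_0(R)|=|\pi_0(Q)|$, and \eqref{condition:piyo} holds. If $Q$ does have a rank function $\rho_Q$, set $\psi:=\rho-\rho_Q$: it is constant on each component of $R$ and satisfies $\rho(v)-\rho(w)+1=\psi(v)-\psi(w)$ on $Q_1\setminus R_1$, so the sign condition forces $\psi$ to be strictly monotone along the edges of $Q/R$; this makes each component of $R$ full and $Q/R$ acyclic, so type~I holds as well and the previous case applies, giving \eqref{condition:hogera}. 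Finally the converse is immediate: the geometric content of \eqref{condition:hogera} is type~I and that of \eqref{condition:piyo} is type~II, so under either condition $\DE(R)$ is a face of $\DE(Q)$ and hence, by the dimension hypothesis, a facet.

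The step I expect to be the real obstacle is the bookkeeping just described: a lluf subquiver may satisfy both \eqref{condition:hogera} and \eqref{condition:piyo}, and exactly when $Q$ carries a rank function the ``$M\neq 0$'' description collapses into the ``$M=0$'' one, so one must verify carefully that in every case the same alternative of \Cref{thm:dim} applies to $R$ and to $Q$ --- only then does the codimension-one hypothesis translate cleanly into $|\pi_0(R)|-|\pi_0(Q)|$ being $1$, with the correct alternative of the theorem selected in each case. By contrast, the face-recognition step is a routine separation argument once the vertex set of $\DE(Q)$ is in hand.
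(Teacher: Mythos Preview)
Your argument is correct, and it takes a somewhat different route from the paper's. The paper breaks the proof into five separate lemmas: two sufficient-condition lemmas (\cref{lem:hogeraisface,lem:piyoisface}) that build explicit supporting hyperplanes $\kappa_{C_k}^{\perp}$ and $H_{\rho}$ for conditions \eqref{condition:hogera} and \eqref{condition:piyo} respectively, and three necessary-condition lemmas (\cref{lem:facetisfull,lem:facetisacyclic,lem:facetispiyo}) that analyse the affine span of $\DE(R)$ inside $V_{Q}$ case by case. You instead start from a single abstract face-recognition criterion (the existence of $\rho$ and $M$), split on whether $M=0$, and read off the two combinatorial types directly. Your device $\psi=\rho-\rho_{Q}$ for collapsing type~II into type~I when $Q$ carries a rank function is a neat shortcut that has no counterpart in the paper; the paper handles that overlap implicitly by never entering the type~II branch when $Q$ has a rank function (\cref{lem:facetispiyo} concludes that $Q$ does \emph{not} have one). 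What the paper's modular approach buys is reusability: \cref{lem:hogeraisface} is invoked again in the proof of \cref{thm:face}, whereas your unified argument is tailored to this single theorem. What your approach buys is brevity and a clearer picture of why exactly two combinatorial types arise---they are just the $M=0$ and $M\neq 0$ cases of one separating functional.
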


Note that lower dimensional faces can be obtained from facets by
iterating the process of taking facets.
Thus we obtain the following
characterization of all faces of $\DE(Q)$ for any quiver $Q$ with a rank
function.  

\begin{theorem}
 \label{thm:face}
 Suppose $Q$ has a rank function.
 For a proper subquiver $R$ of $Q$, the polytope $\DE(R)$ is a face of
 $\DE(Q)$, if and only if
 each connected component of $R$ is a full subquiver and $Q/R$ is
 acyclic. 
\end{theorem}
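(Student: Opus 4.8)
The plan is to reduce everything to the facet case \Cref{thm:facet} and to induct on the codimension $c=\dim\DE(Q)-\dim\DE(R)$. Since $Q$ has a rank function, so does every subquiver, so by \Cref{thm:dim} one has $\dim\DE(R)=\numof{Q_{0}}-\numof{\pi_{0}(R)}-1$ for every lluf subquiver $R$; hence $c=\numof{\pi_{0}(R)}-\numof{\pi_{0}(Q)}$, and the second condition of \Cref{thm:facet} can never occur here (it would force $c=0$). Thus the facets of $\DE(Q)$ are exactly the $\DE(R)$ with $c=1$, each component of $R$ a full subquiver of $Q$, and $Q/R$ acyclic; this is the base case $c=1$ of both implications. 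Two preliminary remarks: $\DE(R)$ depends only on the edge set of $R$, so we may assume $R$ is lluf; and a rank function precludes directed cycles, so the edgeless case ($R_{1}=\emptyset$, $\DE(R)=\emptyset$) is trivially consistent with the statement and may be set aside, so below we assume $R$ has an edge, and in particular $\dim\DE(Q)\ge 2$ whenever $c\ge 2$.

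\textbf{Sufficiency, $c\ge 2$.} Assume every component of $R$ is full in $Q$ and $Q/R$ is acyclic. Since $R\ne Q$ and the components of $R$ are full, $\numof{\pi_{0}(R)}>\numof{\pi_{0}(Q)}$, so the acyclic quiver $Q/R$ has an edge. Among its edges choose $C\to C'$ so that the head $C'$ has the smallest ``depth'', i.e.\ minimizes the length of the longest directed path in $Q/R$ ending at it; then no vertex $X\ne C,C'$ admits directed paths $C\to\cdots\to X$ and $X\to\cdots\to C'$ (otherwise $X$ would have an in-edge whose head has depth strictly less than that of $C'$, contradicting the choice). Let $S$ be the lluf subquiver with edge set $R_{1}$ together with all edges of $Q$ between $C$ and $C'$. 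Then $S$ merges the two components $C,C'$ of $R$ and leaves the others intact, so $\numof{\pi_{0}(S)}=\numof{\pi_{0}(R)}-1$; every component of $S$ is again full in $Q$ (the merged one because it now contains all $Q$-edges within $C$, within $C'$, and between them); and $Q/S=(Q/R)/(C\sim C')$ is acyclic exactly because of the previous sentence about $X$ (the only potential new directed cycle would come from a length-$\ge 2$ path $C\to\cdots\to C'$ in $Q/R$). Hence $S$ has codimension $c-1$ and satisfies the hypotheses of the theorem inside $Q$, so by induction $\DE(S)$ is a face of $\DE(Q)$. On the other hand $R\subseteq S$, every component of $R$ is full in $S$, and $S/R$ (which has only parallel edges $C\to C'$) is acyclic, so $R$ has codimension $1$ in $S$; since $S$ also has a rank function, induction gives that $\DE(R)$ is a face of $\DE(S)$. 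As a face of a face is a face, $\DE(R)$ is a face of $\DE(Q)$.

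\textbf{Necessity, $c\ge 2$.} Let $\DE(R)$ be a nonempty face of $\DE(Q)$; then $\dim\DE(Q)\ge 2$ and $\DE(R)$ is contained in some facet $\DE(R')$. Because the vectors $\varepsilon_{(v,w)}$ all lie on a common sphere, they are vertices of $\DE(Q)$, so any face $\DE(T)$ has vertex set $\{\varepsilon_{e}:e\in T_{1}\}$; comparing the vertex sets of the faces $\DE(R)\subseteq\DE(R')$ yields $R_{1}\subseteq R'_{1}$, and $R$ is a proper lluf subquiver of $R'$ since $\dim\DE(R)<\dim\DE(R')$. By the facet case each component of $R'$ is full in $Q$ and $Q/R'$ is acyclic; and $\DE(R)$ is a face of $\DE(R')$ of codimension $c-1$, with $R'$ carrying a rank function, so by induction each component of $R$ is full in $R'$ and $R'/R$ is acyclic. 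It remains to lift these to $Q$. If $D$ is a component of $R$, contained in a component $C$ of $R'$, then any $Q$-edge inside $D$ is a $Q$-edge inside $C$, hence lies in $R'$ by fullness of $C$, hence in $R$ by fullness of $D$ in $R'$; so $D$ is full in $Q$. Finally, a directed cycle in $Q/R$ projects to a closed walk in $Q/R'$, which must be trivial since $Q/R'$ is acyclic, so all $Q$-edges realizing the cycle lie inside one component $C$ of $R'$ and hence in $R'$; thus the cycle is in fact a directed cycle of $R'/R$, contradicting its acyclicity. Therefore $Q/R$ is acyclic, completing the induction.

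The only genuinely delicate point is the construction of the intermediate facet $S$ in the sufficiency part: an arbitrary edge of $Q/R$ will not do, since contracting it may create a directed cycle, and one really must contract along a covering edge (the failure is already visible for the ``square'' quiver with edges $1\to 2,\ 1\to 3,\ 2\to 4,\ 3\to 4$). Everything else — the dimension bookkeeping, transitivity of the face relation, and the lifting of fullness and acyclicity through $R'$ — is routine.
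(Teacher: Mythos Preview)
Your proof is correct. For the necessity direction you and the paper take the same route---descend through a chain of facets---but you spell out the transitivity of fullness and of acyclicity under successive contractions, which the paper compresses into the single word ``Hence''.

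The real difference is in sufficiency. The paper does not induct at all: it simply invokes \cref{lem:hogeraisface}, which already shows (with no rank-function hypothesis and no dimension bookkeeping) that if each component of $R$ is full and $Q/R$ is acyclic then $\DE(R)$ is a face of $\DE(Q)$. That lemma builds the supporting hyperplanes directly from a linear extension of the partial order on $\pi_{0}(R)$ coming from the acyclicity of $Q/R$. Your inductive construction of the intermediate $S$---and the care you take to contract along a ``covering'' edge of $Q/R$ so as not to create a cycle---is a valid alternative, but it is in effect reproving a special case of \cref{lem:hogeraisface} by hand. The payoff of the paper's approach is that the converse of \cref{thm:face} actually holds for arbitrary $Q$; your route, passing through \cref{thm:facet} and the dimension formula of \cref{thm:dim}, genuinely needs the rank function.
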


When $Q=D(G)$ for a simple graph $G$, it does not have a rank function
and the condition (\ref{condition:piyo}) in \cref{thm:facet} applies. It
is immediate to translate the condition (\ref{condition:piyo}) into the
following form. 

\begin{corollary}[\cref{cor:SE}]
 For a connected lluf subquiver $R$ of $D(G)$ with
 $\dim(\DE(R))=\dim(\SE(G))-1$, $\DE(R)$ is a facet of $\SE(G)$ if and
 only if
 $\numof{\pi_{0}(R)}=\numof{\pi_{0}(D(G))}$ and there exists a function
 $\rho\in \RR^{G_{0}}$ such that
 \[
 \rho(v)-\rho(w) =
 \begin{cases}
  1 & ((v,w)\in R_{1}) \\
  -1 & ((w,v)\in R_{1}) \\
  0 & (\text{otherwise})
 \end{cases}
 \]
 for $(v,w)\in D(G)_{1}$.
\end{corollary}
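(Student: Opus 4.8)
The plan is to specialize \cref{thm:facet} to the quiver $Q=D(G)$, using the identity $\SE(G)=\DE(D(G))$ recorded in the introduction. The first point is that condition~(\ref{condition:hogera}) of \cref{thm:facet} cannot occur under the present hypotheses: since $R$ is a \emph{connected} lluf subquiver we have $\numof{\pi_{0}(R)}=1$, whereas (\ref{condition:hogera}) would force $\numof{\pi_{0}(R)}=\numof{\pi_{0}(D(G))}+1\ge 2$. Hence, for $R$ as in the statement, $\DE(R)$ is a facet of $\SE(G)$ if and only if condition~(\ref{condition:piyo}) holds, i.e.\ $\numof{\pi_{0}(R)}=\numof{\pi_{0}(D(G))}$ and $R$ admits a rank function $\sigma\colon G_{0}\to\RR$ (so $\sigma(v)+1=\sigma(w)$ for every $(v,w)\in R_{1}$) with $(\sigma(v)-\sigma(w)+1)(\sigma(v')-\sigma(w')+1)>0$ for all $(v,w),(v',w')\in D(G)_{1}\setminus R_{1}$. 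It then remains to show that this is equivalent to the existence of a $\rho\in\RR^{G_{0}}$ satisfying the displayed formula, and the translation is effected by $\rho=-\sigma$.

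The key structural feature of $D(G)$ is that its edge set is closed under reversal, so for each edge of $G$ joining vertices $v$ and $w$ exactly one of the following happens: (i) both $(v,w),(w,v)\in R_{1}$; (ii) exactly one of them lies in $R_{1}$; (iii) neither does. Case~(i) is incompatible with $R$ having a rank function, since the two edges would force $\sigma(w)=\sigma(v)+1$ and $\sigma(v)=\sigma(w)+1$; it is also incompatible with the existence of a $\rho$ satisfying the displayed formula, since that formula would give both $\rho(v)-\rho(w)=1$ and $\rho(w)-\rho(v)=1$; so in case~(i) both sides of the asserted equivalence are false. In case~(ii), say $(v,w)\in R_{1}$ and $(w,v)\notin R_{1}$, the edge $(w,v)$ belongs to $D(G)_{1}\setminus R_{1}$ and the corresponding factor equals $\sigma(w)-\sigma(v)+1=2$; since $R_{1}\neq\emptyset$ (as $\DE(R)$ is nonempty) such an edge exists once $R$ has a rank function, so the common sign of the factors in (\ref{condition:piyo}) is forced to be positive. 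In case~(iii) both $(v,w)$ and $(w,v)$ lie in $D(G)_{1}\setminus R_{1}$, and positivity of $(\sigma(v)-\sigma(w)+1)(\sigma(w)-\sigma(v)+1)=1-(\sigma(v)-\sigma(w))^{2}$ forces $\numof{\sigma(v)-\sigma(w)}<1$; since $R$ is connected with $R_{0}=G_{0}$, the number $\sigma(v)-\sigma(w)$ is an integer, whence $\sigma(v)=\sigma(w)$.

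Putting these together, condition~(\ref{condition:piyo}) is equivalent to requiring $\numof{\pi_{0}(R)}=\numof{\pi_{0}(D(G))}$ and the existence of a rank function $\sigma$ of $R$ that is constant on every edge of $G$ neither orientation of which lies in $R_{1}$; and cases (i)--(iii) then show that $\rho=-\sigma$ satisfies the displayed formula, while conversely, for any $\rho$ satisfying that formula, $-\rho$ is a rank function of $R$ with the stated constancy property (case~(i) being automatically excluded). This is exactly the claim. I do not expect a genuine obstacle here; the only points demanding care are the sign convention relating $\rho$ to a rank function and the short integrality argument turning the strict inequality $1-(\sigma(v)-\sigma(w))^{2}>0$ into the equality $\sigma(v)=\sigma(w)$.
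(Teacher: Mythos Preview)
Your proof is correct and follows the same overall strategy as the paper: specialize \cref{thm:facet} to $Q=D(G)$ and then translate condition~(\ref{condition:piyo}) into the displayed formula via $\rho=-\sigma$.

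The one noteworthy difference is in how condition~(\ref{condition:hogera}) is excluded. The paper observes that for a symmetric quiver, $D(G)/R$ can never be acyclic: any edge of $D(G)$ between two distinct components of $R$ comes paired with its reversal, producing a $2$-cycle in the contraction. This rules out~(\ref{condition:hogera}) for \emph{every} proper lluf subquiver, which is why the version stated as \cref{cor:SE} in the body does not need $R$ connected. You instead invoke the connectivity hypothesis directly to get $\numof{\pi_{0}(R)}=1<\numof{\pi_{0}(D(G))}+1$; this is shorter but tied to the ``connected'' formulation in the introduction. Your treatment of the equivalence between~(\ref{condition:piyo}) and the displayed formula is also more explicit than the paper's (which simply calls it ``easily seen''); in particular, the integrality step you flag---that a rank function on a connected $R$ takes integer-spaced values, so $1-(\sigma(v)-\sigma(w))^{2}>0$ forces $\sigma(v)=\sigma(w)$---is precisely the point that needs care, and you handle it correctly.
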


This is essentially equivalent to a characterization of facets of
symmetric edge polytopes in \cite{MR3949939} when $G$ is connected.

After fixing notation and terminology in \cref{sec:notation}, 
\cref{thm:dim} is proved in
\cref{sec:dim}, and \cref{thm:facet,thm:face} are proved in
\cref{sec:facet}.
We end this paper with sample computations in \cref{sec:examples}.
In particular, a complete characterization of all faces of the symmetric 
edge polytope of a cyclic graph is obtained, which was previously done
by the second author without using the characterization in this paper
and became the starting point of this 
work. 
%Note that the case of even cycles recently appeared in a paper
%\cite{1910.05193} by D'Ali, Delucchi, and Micha{\l}ek independently. 

%  Candidate of References:
%  \cite{MR1153934},
%  \cite{MR753872},
%  \cite{MR363963},
%  \cite{MR4081480},
%  \cite{MR4227793},
%  \cite{MR4245231},
%  \cite{MR1697418},
%  \cite{MR3331969},
%  \cite{MR357135},
%  \cite{MR2940501},
%  \cite{MR3949939},
%  \cite{2103.06404}

% Main part
%\input{p2.tex}
% !TeX root =./x2.tex
% !TeX program = pdfLaTeX
\def\Time-stamp: %
<#1 #2 #3>{\def\lastupdatetimestamp{#1 #2}}
\Time-stamp: <2022-03-01 09:41:09 nous>
%\def\thepage{\arabic{page} / ({\tiny \lastupdatetimestamp})}
%%% Time stamp macro for emacs.

\section{Notation and terminology}
\label{sec:notation}

First we fix notation and terminology for quivers.

\begin{definition}
 We call a pair $Q=(Q_0,Q_1)$
 a \defit{quiver}
 if $Q_0$ is a finite set and
 $Q_1$ is a subset of $Q_0\times Q_0 \setminus \Set{(v,v)|v\in Q_0}$.
 An element of $Q_0$ is called a \defit{vertex} of $Q$,
 and an element $(v,w)$ in $Q_1$ is called
 an \defit{edge} of $Q$ from $v$ to $w$.
\end{definition}

The following classes of quivers play essential roles in this paper.

\begin{definition}
 A quiver $Q$ is said to be
 \begin{enumerate}
  \item \defit{acyclic} if there does not exist $v_0,\ldots,v_n \in Q_{0}$
	such that
	$n>1$,  $v_n=v_0$, and 
	$(v_t,v_{t+1})\in Q_1$ for $t=0,\ldots,n-1$,

  \item \defit{asymmetric} if 
	\begin{align*}
	 (v,w)\in Q_1 \implies (w,v) \not\in Q_1,
	\end{align*}
	 and
  \item \defit{symmetric} if 
	\begin{align*}
	 (v,w)\in Q_1 \iff (w,v) \in Q_1.
	\end{align*}
 \end{enumerate}
\end{definition}

Note that a quiver may not be symmetric nor asymmetric.

\begin{definition}
% We call an undirected graph obtained in the following manner
% the \defit{underlying graph} of $Q$:
 We define the \defit{underlying graph} of a quiver $Q$
 to be
 the (undirected) graph obtained from $Q$ by using all vertices of $Q$ and
 by replacing all directed edges of $Q$ with undirected edges.
\end{definition}

Underlying graphs may have multiple edges.
The underlying graph of $Q$ is simple
if and only if $Q$ is asymmetric.

In order to describe faces of directed edge polytopes, we need subquivers.

\begin{definition}
 A quiver $R$ is called a \defit{subquiver} of $Q$
 if $R_0\subset Q_0$ and  $R_1\subset Q_1$.
% We write $R\subset Q$ to denote
% that $R$ is a subquiver of $Q$.
 We say that a subquiver $R$ of $Q$ is
 \begin{enumerate}
  \item \defit{proper}
	if $R_1$ is a proper subset of $Q_1$,
  \item \defit{full}
	if $R_1=\Set{(v,w)\in Q_1| v,w\in R_0}$,
	and
  \item \defit{lluf} if $R_0=Q_0$.
 \end{enumerate}
\end{definition} 

We make use of (undirected) walks to define connectivity of quivers.

\begin{definition}
 Let $Q$ be a quiver.
 An \defit{undirected walk} from $v_0$ to $v_n$ in $Q$
 is  a sequence $(v_0,v_1,\ldots,v_n)$ of vertices in $Q$
 such that $(v_t,v_{t+1})\in Q_1$ or $(v_{t+1},v_t)\in Q_1$ for all $t$.

 An undirected walk $(v_0,v_1,\ldots,v_n)$ is called
 \begin{enumerate}
%  \item an \defit{undirected path} if all vertices are distinct,
  \item \defit{closed} if $v_0=v_n$, and
  \item an \defit{undirected cycle} if it is closed and
	$v_i\neq v_j$ for any pair $(i,j)$ with $0\leq i<j<n$. 
 \end{enumerate}
\end{definition}

%In fact, the number of connected components is 

\begin{definition}
 We say a quiver $Q$ is \defit{connected} if, for any pair $(v,w)$ of
 vertices of $Q$, there exists an undirected walk from $v$ to $w$ .

 A connected maximal subquiver of $Q$
 is called a \defit{connected component} of $Q$. The set of all
 connected components of $Q$ is denoted by $\pi_{0}(Q)$. 
 The number $\numof{Q_{0}}-\numof{\pi_{0}(Q)}$ is denoted by $c(Q)$ and
 is called the \emph{coconnectivity} of $Q$.
\end{definition}

 Note that a quiver is connected if and only if the underlying graph is
 connected. 

\begin{definition}
 For a connected quiver $Q$, we call a lluf asymmetric subquiver 
 $R$ of $Q$ a \defit{spanning polytree} in $Q$ if the underlying graph
 of $R$ is a tree, 
 i.e.\ an acyclic connected simple undirected graph.
 For a quiver $Q$,
 we call a lluf subquiver $R$ of $Q$ a \defit{spanning polyforest} in $Q$
 if each connected component of $R$
 is a spanning polytree in some connected component of $Q$.
\end{definition}

We also need directed walks and cycles.

\begin{definition}
 A \defit{directed walk} from $v_0$ to $v_n$ in a quiver $Q$
 is a sequence $(v_0,v_1,\ldots,v_n)$ of vertices in $Q$
 such that $(v_t,v_{t+1})\in Q_1$ for all $t$.
 A directed walk $(v_0,v_1,\ldots,v_n)$ is called
 a \defit{directed cycle} if $v_{0}=v_{n}$ and
 $v_{i}\neq v_{j}$ for any pair $(i,j)$ with $0\leq i<j<n$. 
\end{definition}

By definition, a quiver is acyclic if and only if it does not contain a
directed cycle.

\section{Dimension}
\label{sec:dim}

Here is our main object of study.

\begin{definition}
Let $Q$ be a quiver.
The vector space of maps from $Q_0$ to $\RR$ is denoted by $\RR^{Q_0}$.
It is equipped with an inner product $\Braket{\ ,\ }$ defined by
\begin{align*}
 \Braket{\rho,\delta}=\sum_{v\in Q_0} \rho(v)\delta(v)
\end{align*}
for $\rho,\delta\in \RR^{Q_0}$.
For a subset $V\subset Q_0$,
we define an element $\kappa_{V} \in \RR^{Q_0}$ by
\begin{align*}
 \kappa_{V}(v)
=
\begin{cases}
1&(v\in V) \\
0&(v\not\in V)
 \end{cases}
\end{align*}
The set
 $\Set{\kappa_{\set{v}}|v\in Q_0}$ is
 a standard basis for the vector space $\RR^{Q_0}$. 
For $(v,w)\in Q_0\times Q_0$, 
we define the vector $\varepsilon_{(v,w)}$ by
\begin{align*}
\varepsilon_{(v,w)}= \kappa_{\set{v}} - \kappa_{\set{w}}.
\end{align*}
Define a convex polytope in $\RR^{Q_{0}}$ by
\begin{align*}
 \DE(Q)=\conv\Set{\varepsilon_{(v,w)}|(v,w)\in Q_1}.
\end{align*}
 This is called 
 the \defit{directed edge polytope} of $Q$.
% We define $\dim(\DE(Q))$ to be
% the dimension of the polytope $\DE(Q)$.
\end{definition}

\begin{remark}
 \label{remark:incidence_matrix}
 For a quiver $Q$, define a map
\[
 \delta: Q_{0}\times Q_{1} \longrightarrow \{-1,0,1\}
\]
 by 
 \[
 \delta(v,e) = 
 \begin{cases}
  1 & (e=(v,w) \text{ for some } w\in Q_{0}) \\
  -1 & (e=(w,v) \text{ for some } w\in Q_{0}) \\
  %% 1 & (e=(w,v) \text{ for some } w\in Q_{0}) \\
  %% -1 & (e=(v,w) \text{ for some } w\in Q_{0}) \\
  0 & (\text{ otherwise}).
 \end{cases}
 \]
 The matrix obtained from this map by choosing appropriate total orders
 of $Q_{0}$ and $Q_{1}$ is called the \emph{incidence matrix} of
 $Q$ and is denoted by $I(Q)$.
 Note that the vectors $\varepsilon_{(v,w)}$ are column vectors of
 $I(Q)$ and the directed edge polytope $\DE(Q)$ is the convex hull of these
 column vectors.
\end{remark}

It is easy to determine the vertices of $\DE(Q)$. For a convex polytope
$P$, let us denote the set of vertices of $P$ by $\ver(P)$.

\begin{lemma}
 \label{lem:vertices}
 For a quiver $Q$, we have
 \[
 \ver(\DE(Q)) = \Set{\varepsilon_{(v,w)}| (v,w)\in Q_{1}}.
 \]
\end{lemma}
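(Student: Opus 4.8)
The plan is to prove the two inclusions separately. The inclusion $\ver(\DE(Q)) \subseteq \Set{\varepsilon_{(v,w)}|(v,w)\in Q_1}$ is automatic, since $\DE(Q)$ is presented as the convex hull of the finite set $\Set{\varepsilon_{(v,w)}|(v,w)\in Q_1}$ and every vertex of a polytope is an extreme point, while an extreme point of $\conv(S)$ necessarily belongs to $S$. So the content is the reverse inclusion: each generator $\varepsilon_{(v,w)}$ is a vertex of $\DE(Q)$.

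For this I would produce, for each edge $(v,w)\in Q_1$, a supporting hyperplane of $\DE(Q)$ whose intersection with $\DE(Q)$ is exactly $\{\varepsilon_{(v,w)}\}$, using the linear functional $\Braket{\varepsilon_{(v,w)},\,\cdot\,}$. Expanding via $\varepsilon_{(v,w)}=\kappa_{\set v}-\kappa_{\set w}$ gives
\[
 \Braket{\varepsilon_{(v,w)},\varepsilon_{(v',w')}}
 = \delta_{v,v'}-\delta_{v,w'}-\delta_{w,v'}+\delta_{w,w'},
\]
where $\delta$ denotes the Kronecker delta. When $(v',w')=(v,w)$ this equals $2$; when $(v',w')\neq(v,w)$, the fact that $Q$ has no loops (so $v\neq w$ and $v'\neq w'$) forces $\delta_{v,v'}+\delta_{w,w'}\le 1$, hence the value is at most $1$. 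Therefore $\Set{x\in\RR^{Q_0}|\Braket{\varepsilon_{(v,w)},x}=2}$ supports $\DE(Q)$ and meets it only at $\varepsilon_{(v,w)}$, so $\varepsilon_{(v,w)}\in\ver(\DE(Q))$. (The generators are moreover pairwise distinct, since $(v,w)\mapsto\varepsilon_{(v,w)}$ is injective on pairs with distinct coordinates, so the two inclusions together give the claim.)

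A more conceptual route gives the same conclusion without any computation: every generator satisfies $\|\varepsilon_{(v,w)}\|^2 = 2$, so all generators lie on the sphere of radius $\sqrt2$ centered at the origin, and by strict convexity of the Euclidean norm any point of a finite set lying on a common sphere is an extreme point of the convex hull of that set. There is no genuine obstacle here; the only point requiring a line of care is the elementary remark that distinct edges give distinct vectors, so that $\Set{\varepsilon_{(v,w)}|(v,w)\in Q_1}$ is precisely the vertex set rather than merely containing it.
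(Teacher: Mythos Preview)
Your proof is correct; both the explicit supporting-hyperplane computation and the sphere argument are valid, and you are right to note that distinct edges give distinct vectors. The paper itself states this lemma without proof, treating it as elementary, so your argument simply fills in the omitted details.
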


In general, any face of a convex polytope is the convex hull of a
collection of vertices of the original polytope. In particular, any face
of $\DE(Q)$ is of the form $\DE(R)$ for a subquiver $R$.
The main purpose of this article is
to give a characterization of subquivers corresponding to faces.

Note that even if $Q$ is connected, a subquiver $R$ representing a face
of $\DE(Q)$ may not be connected.
It turns out that the number of connected components is closely related
to the dimension of $\DE(R)$.  
In fact, an upper bound is given by the
coconnectivity $c(R)$.

%For $\rho, \epsilon \in \RR^{Q_0}$,
%We define $\rho^{\perp}$ and $\rho^{\perp}+\epsilon$ by
%\begin{align*}
%\rho^{\perp}&=\Set{\delta\in\RR^{Q_0}| \Braket{\rho,\delta}=0},\\
%\rho^{\perp}+\epsilon&=\Set{\delta+\epsilon|\delta\in \rho^\perp}.
%\end{align*}
%First we discuss the upper bound of the dimension.
\begin{lemma}
 \label{lemma:ocs:constant}
 Define a vector subspace $V_{Q}$ of $\RR^{Q_{0}}$ by
 \[
  V_{Q} = \bigcap_{R\in\pi_{0}(Q)} \kappa_{R_{0}}^{\perp},
 \]
 where $\kappa_{R_{0}}^{\perp}$ is the orthogonal complement of
 $\kappa_{R_{0}}$ in $\RR^{Q_{0}}$.
 Then $\DE(Q)\subset V_{Q}$ and we have
 $\dim \DE(Q) \le \numof{Q_{0}}-\numof{\pi_{0}(Q)}=c(Q)$.
\end{lemma}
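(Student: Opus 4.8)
The plan is to verify the two assertions separately: first the inclusion $\DE(Q)\subset V_Q$, then the dimension bound $\dim\DE(Q)\le\numof{Q_0}-\numof{\pi_0(Q)}$.

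\medskip

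For the inclusion, it suffices by convexity to check that each vertex $\varepsilon_{(v,w)}$ lies in $V_Q$, i.e.\ that $\Braket{\varepsilon_{(v,w)},\kappa_{R_0}}=0$ for every connected component $R\in\pi_0(Q)$. An edge $(v,w)\in Q_1$ has both endpoints in a single connected component, so either $v,w\in R_0$ or $v,w\notin R_0$; in the first case $\Braket{\kappa_{\set v}-\kappa_{\set w},\kappa_{R_0}}=1-1=0$, and in the second case both terms vanish. Hence $\varepsilon_{(v,w)}\in\kappa_{R_0}^\perp$ for all components $R$, so $\varepsilon_{(v,w)}\in V_Q$, and therefore $\DE(Q)=\conv\Set{\varepsilon_{(v,w)}|(v,w)\in Q_1}\subset V_Q$.

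\medskip

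For the dimension bound, note that the vectors $\kappa_{R_0}$ for $R\in\pi_0(Q)$ have pairwise disjoint supports (the vertex sets of distinct components are disjoint), hence are linearly independent; as there are $\numof{\pi_0(Q)}$ of them, $\dim V_Q=\numof{Q_0}-\numof{\pi_0(Q)}$. Since $\DE(Q)\subset V_Q$ and the dimension of a polytope is the dimension of its affine hull, which is contained in the linear subspace $V_Q$, we get $\dim\DE(Q)\le\dim V_Q=\numof{Q_0}-\numof{\pi_0(Q)}=c(Q)$.

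\medskip

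I expect no serious obstacle here; the only point requiring a moment's care is the observation that every edge of $Q$ stays within one connected component, which makes the inner product with each $\kappa_{R_0}$ vanish — the whole argument is a short computation once that is noted.
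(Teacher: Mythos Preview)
Your proof is correct and follows essentially the same approach as the paper: both verify the inclusion by checking that each generator $\varepsilon_{(v,w)}$ is orthogonal to every $\kappa_{R_0}$ via the observation that an edge lies entirely inside one connected component, and both deduce the dimension bound from the linear independence (equivalently, transversality) of the $\kappa_{R_0}$'s. Your justification of that independence via disjoint supports is slightly more explicit than the paper's phrase ``intersect transversally,'' but the argument is the same.
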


\begin{proof}
 Let $R$ be a connected component of $Q$.
 We show that $\DE(Q)$ is contained in the hyperplane
 $\kappa_{R_{0}}^{\perp}$. 
 For each edge $(v,w)$ in $Q$, either $\{v,w\}\subset R_{0}$ or
 $\{v,w\}\cap R_{0}=\emptyset$. In the former case,
 $\langle \kappa_{R_{0}},\varepsilon_{(v,w)}\rangle=1-1$ and in the
 latter case, 
 $\langle \kappa_{R_{0}},\varepsilon_{(v,w)}\rangle=0-0$.
 Thus $\DE(Q)\subset \kappa_{R_{0}}^{\perp}$.

 Since the hyperplanes defined by the vertex sets of connected
 components intersect transversally, we have
 \[
  \dim \DE(Q) \le \dim \left(\bigcap_{R\in\pi_{0}(Q)}
 \kappa_{R_{0}}^{\perp}\right) = \dim \RR^{Q_{0}} 
 - \numof{\pi_{0}(Q)} = c(Q). 
 \]
\end{proof}
%\begin{proof}
%  By definition,
%it follows that
%\begin{align*}
%  \Braket{\kappa_{\check Q_0},\varepsilon_{(v,w)}}
%  =
%\begin{cases}
%  1-1=0 & (v,w,\in \check Q_0)\\
%0-0=0 & (v,w,\not\in \check Q_0),
%\end{cases}
%\end{align*}
%which implies the lemma.
%\end{proof}
%We obtain the following 
%as a corollary to \cref{lemma:ocs:constant}
%\begin{cor}
%  \label{lemma:dimen:upperbound:general}
%  For a quiver $Q$, $\dim(\DE(Q))\leq\rank(Q)$.
%\end{cor}

It turns out that $\dim \DE(Q)$ varies
depending on the existence of a rank function, since such a function
defines another hyperplane that contains the directed edge polytope.

\begin{definition}
 For a quiver $Q$, a function $\rho\in\RR^{Q_{0}}$ is called a
 \emph{rank function} of $Q$ if it satisfies $\rho(v)+1=\rho(w)$ for
 each edge $(v,w)\in Q_{1}$.
\end{definition}

\begin{lemma}
 \label{lemma:ocs:rank}
 When $Q$ has a rank function $\rho$, choose an edge $(v,w)\in Q_{1}$
 and define a hyperplane $H_{\rho}$ in
 $\RR^{Q_{0}}$ by
 \[
  H_{\rho} = \rho^{\perp} + \varepsilon_{(v,w)} =
 \Set{\delta+\varepsilon_{(v,w)}|\delta\in \rho^\perp}.
 \]
 Then this is independent of the choice of an edge $(v,w)$ and contains
 $\DE(Q)$. 
\end{lemma}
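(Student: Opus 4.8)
The plan is to show two things: first, that $H_\rho$ does not depend on which edge $(v,w)\in Q_1$ is used to translate $\rho^\perp$; and second, that every vertex $\varepsilon_{(v',w')}$ of $\DE(Q)$ lies on $H_\rho$, whence by convexity $\DE(Q)\subset H_\rho$.

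For the independence, suppose $(v,w)$ and $(v',w')$ are both edges of $Q$. Then $H_\rho = \rho^\perp + \varepsilon_{(v,w)}$ and $\rho^\perp + \varepsilon_{(v',w')}$ coincide if and only if $\varepsilon_{(v,w)} - \varepsilon_{(v',w')} \in \rho^\perp$, i.e.\ $\langle \rho, \varepsilon_{(v,w)}\rangle = \langle\rho, \varepsilon_{(v',w')}\rangle$. But the defining property of a rank function gives, for any edge $(a,b)\in Q_1$,
\[
 \langle\rho,\varepsilon_{(a,b)}\rangle = \rho(a) - \rho(b) = -1,
\]
so $\langle\rho,\varepsilon_{(v,w)}\rangle = -1 = \langle\rho,\varepsilon_{(v',w')}\rangle$, and the two affine hyperplanes agree. (One should also note $\rho\neq 0$ as soon as $Q_1\neq\emptyset$, so $\rho^\perp$ really is a hyperplane; the statement is vacuous when $Q$ has no edges.)

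For the containment, fix the chosen edge $(v,w)$. A point lies in $H_\rho$ precisely when $\langle\rho, x\rangle = \langle\rho,\varepsilon_{(v,w)}\rangle = -1$. For every edge $(v',w')\in Q_1$ the computation above gives $\langle\rho,\varepsilon_{(v',w')}\rangle = \rho(v') - \rho(w') = -1$, so each vertex of $\DE(Q)$ (which by \Cref{lem:vertices} is of the form $\varepsilon_{(v',w')}$) lies on $H_\rho$. Since $\DE(Q)$ is the convex hull of these vertices and $H_\rho$ is an affine subspace (hence convex), we conclude $\DE(Q)\subset H_\rho$.

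I do not anticipate a genuine obstacle here: the proof is a direct unwinding of the definitions of rank function, of $\varepsilon_{(v,w)}$, and of the inner product. The only point that requires a line of care is confirming that $H_\rho$, as defined, is an affine hyperplane (not all of $\RR^{Q_0}$), which amounts to observing $\rho$ is not the zero functional whenever $Q_1$ is nonempty — and if $Q_1=\emptyset$ the hyperplane $H_\rho$ is not even defined, so there is nothing to prove.
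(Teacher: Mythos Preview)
Your proof is correct and follows essentially the same route as the paper's own argument: compute $\langle\rho,\varepsilon_{(a,b)}\rangle=\rho(a)-\rho(b)=-1$ for every edge $(a,b)\in Q_1$, conclude that all the translates $\rho^\perp+\varepsilon_{(v,w)}$ coincide, and deduce that every vertex of $\DE(Q)$ lies on $H_\rho$. Your added remark that $\rho\neq 0$ whenever $Q_1\neq\emptyset$ is a small nicety the paper leaves implicit.
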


\begin{proof}
 For edges $(v,w)$ and $(v',w')$ in $Q$, we have
 \[
 \langle \rho, \varepsilon_{(v,w)}\rangle = \rho(v)-\rho(w)  = -1 =
 \rho(v')-\rho(w') = 
 \langle \rho, \varepsilon_{(v',w')}\rangle,
 \]
 which implies that
 $\varepsilon_{(v,w)}-\varepsilon_{(v',w')}\in\rho^{\perp}$ and  
 $\rho^{\perp}+\varepsilon_{(v,w)}=\rho^{\perp}+\varepsilon_{(v',w')}$.
 It also implies that all vertices of $\DE(Q)$ are contained in
 $H_{\rho}$ and thus $\DE(Q)\subset H_{\rho}$.
\end{proof}

The hyperplane $H_{\rho}$ is transversal to the hyperplanes defined by
connected components of $Q$. And we have the following upper bound of
$\dim\DE(Q)$. 

\begin{corollary}
 \label{cor:dim:upperbound}
 If $Q$ has a rank function, then $\dim (\DE(Q))\le c(Q)-1$.
\end{corollary}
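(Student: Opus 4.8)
The plan is to combine the two containments already established: by \cref{lemma:ocs:constant} we have $\DE(Q)\subset V_{Q}$, and by \cref{lemma:ocs:rank}, after fixing an edge $(v,w)\in Q_{1}$ (which exists, since otherwise $\DE(Q)=\emptyset$ and the inequality holds vacuously), we have $\DE(Q)\subset H_{\rho}=\rho^{\perp}+\varepsilon_{(v,w)}$. Hence $\DE(Q)\subset V_{Q}\cap H_{\rho}$, and it suffices to show $\dim(V_{Q}\cap H_{\rho})=c(Q)-1$.

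First I would rewrite $V_{Q}\cap H_{\rho}$ as an affine translate of a linear subspace. Recall from the proof of \cref{lemma:ocs:constant} that $\varepsilon_{(v,w)}\in V_{Q}$, and $\varepsilon_{(v,w)}\in H_{\rho}$ by construction; since $V_{Q}$ is a linear subspace, a direct check gives $V_{Q}\cap H_{\rho}=\varepsilon_{(v,w)}+(V_{Q}\cap\rho^{\perp})$, so that $\dim(V_{Q}\cap H_{\rho})=\dim(V_{Q}\cap\rho^{\perp})$.

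Next I would identify $V_{Q}\cap\rho^{\perp}$ as the orthogonal complement of the span of the functionals $\Set{\kappa_{R_{0}}|R\in\pi_{0}(Q)}\cup\Set{\rho}$, so that its dimension is $\numof{Q_{0}}$ minus the dimension of that span. The functions $\kappa_{R_{0}}$ have pairwise disjoint supports (the connected components of $Q$ partition $Q_{0}$), hence are linearly independent, and their span is exactly the space of functions on $Q_{0}$ that are constant on each connected component. The key point is that $\rho$ does not lie in this span: for the chosen edge we have $\rho(w)=\rho(v)+1\ne\rho(v)$, so $\rho$ is non-constant on the component containing $v$ and $w$. Therefore the span has dimension $\numof{\pi_{0}(Q)}+1$, giving $\dim(V_{Q}\cap\rho^{\perp})=\numof{Q_{0}}-\numof{\pi_{0}(Q)}-1=c(Q)-1$, as desired.

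The only substantive step is this linear-independence claim — i.e.\ that $H_{\rho}$ is genuinely transversal to $V_{Q}$ rather than containing it — and it reduces to the elementary observation that a rank function is non-constant along any edge; the rest is bookkeeping with orthogonal complements.
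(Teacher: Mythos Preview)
Your proof is correct and follows exactly the approach the paper intends: the paper states only that ``the hyperplane $H_{\rho}$ is transversal to the hyperplanes defined by connected components of $Q$'' and then records the corollary without further argument. Your proposal supplies precisely the missing justification for that transversality claim, namely that $\rho$ is non-constant on any component containing an edge and hence is linearly independent from the $\kappa_{R_{0}}$; the rest of your write-up is the same orthogonal-complement bookkeeping implicit in the paper's one-line remark.
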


The choice of the term ``rank function'' is justified by the following
fact. 

%By definition,
%if $Q$ satisfies \Cref{condition:fuga} with $\rho$,
%then each subquiver of $Q$ also satisfies \Cref{condition:fuga}. 
%It is easy to show the following:
\begin{proposition}
 \label{prop:fuga}
 The following are equivalent for a quiver $Q$:
 \begin{enumerate}
  \item
       $Q$ has a rank function $\rho$.
  \item
       $Q$ is asymmetric and satisfies
       \begin{align*}
	\numof{\Set{t|(v_t,v_{t+1})\in Q_1}}
	=\numof{\Set{t|(v_{t+1},v_t)\in Q_1}}
       \end{align*}
       for each undirected closed walk
       $(v_0,v_1,\ldots,v_n)$ in $Q$.
\item
     $Q$ is asymmetric and satisfies
     \begin{align*}
      \numof{\Set{t|(v_t,v_{t+1})\in Q_1}}
      =\numof{\Set{t|(v_{t+1},v_t)\in Q_1}}
     \end{align*}
     for each undirected cycle
     $(v_0,v_1,\ldots,v_n)$ in $Q$.
  \item
       $Q$ is the Hasse diagram of a graded poset $(Q_0,\leq)$ with rank
       function $\rho$.
%       where a poset is said to be graded if there exists the rank function
%  $\rho\colon Q_0\to \ZZ$ such that $\rho(v)+1=\rho(w)$ for $v$ covered by $w$.
 \end{enumerate}
\end{proposition}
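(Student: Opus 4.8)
The plan is to prove \cref{prop:fuga} by establishing the cycle of implications $(1)\Rightarrow(2)\Rightarrow(3)\Rightarrow(1)$ together with the equivalence $(1)\Leftrightarrow(4)$, since $(4)$ is essentially a restatement of $(1)$ once one knows what a graded poset is.

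\medskip

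For $(1)\Rightarrow(2)$: assume $Q$ has a rank function $\rho$. If $(v,w)$ and $(w,v)$ were both edges, then $\rho(v)+1=\rho(w)$ and $\rho(w)+1=\rho(v)$ would force $2=0$, so $Q$ is asymmetric. For an undirected closed walk $(v_0,\dots,v_n)$ with $v_0=v_n$, telescoping gives
\[
0 = \rho(v_n)-\rho(v_0) = \sum_{t=0}^{n-1}\bigl(\rho(v_{t+1})-\rho(v_t)\bigr),
\]
and each summand equals $+1$ when $(v_t,v_{t+1})\in Q_1$ and $-1$ when $(v_{t+1},v_t)\in Q_1$ (these cases are exclusive by asymmetry). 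Hence the number of forward steps equals the number of backward steps, which is $(2)$.

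\medskip

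The implication $(2)\Rightarrow(3)$ is immediate since every undirected cycle is in particular an undirected closed walk. For $(3)\Rightarrow(1)$, I would construct $\rho$ component by component. On a connected component, fix a base vertex $v_{\ast}$ and set $\rho(v_{\ast})=0$; for any other vertex $u$, pick an undirected walk $(v_{\ast}=u_0,\dots,u_k=u)$ and define $\rho(u)$ to be the signed count (forward steps minus backward steps) along this walk. Well-definedness is the crux: if two walks from $v_{\ast}$ to $u$ give different signed counts, concatenating one with the reverse of the other yields an undirected closed walk whose forward-minus-backward count is nonzero, and such a closed walk contains an undirected cycle with the same nonzero imbalance (a standard reduction: repeatedly excise repeated vertices), contradicting $(3)$ — here asymmetry of $Q$ is used to guarantee that each step of the walk is unambiguously forward or backward. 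Then for any edge $(v,w)$, appending that edge to a walk reaching $v$ shows $\rho(w)=\rho(v)+1$, so $\rho$ is a rank function. I expect the careful verification that a closed walk with nonzero imbalance forces a cycle with nonzero imbalance (rather than the imbalance cancelling out in the decomposition into cycles) to be the main obstacle, and I would handle it by induction on the length of the closed walk, splitting at the first repeated vertex.

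\medskip

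Finally, for $(1)\Leftrightarrow(4)$: given a rank function $\rho$ on an asymmetric $Q$, define $u\le v$ iff there is a directed walk from $u$ to $v$ (allowing the trivial walk), which gives a partial order because $\rho$ is strictly increasing along directed edges, hence along nontrivial directed walks, ruling out directed cycles and antisymmetry failures; $\rho$ is then a rank function of the poset in the order-theoretic sense, and one checks that the covering relations of $(Q_0,\le)$ are exactly the edges of $Q$ — a cover $u\lessdot v$ must come from an edge since $\rho(v)-\rho(u)=1$ forbids an intermediate vertex on a directed walk, and conversely every edge $(u,v)$ has $\rho(v)=\rho(u)+1$ so no vertex lies strictly between. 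Thus $Q$ is the Hasse diagram of the graded poset. Conversely, the Hasse diagram of a graded poset with rank function $\rho$ obviously has $\rho$ as a rank function in our sense, since covering relations increase the rank by exactly $1$. This closes the equivalences.
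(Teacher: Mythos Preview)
Your argument is correct and follows essentially the same route as the paper: telescoping for $(1)\Rightarrow(2)$, constructing $\rho$ by signed path-counts for the converse, and reducing closed walks to cycles for the equivalence of $(2)$ and $(3)$; the paper simply routes the construction through $(2)\Rightarrow(1)$ rather than your $(3)\Rightarrow(1)$, and dismisses $(1)\Leftrightarrow(4)$ as ``by definition'' where you (reasonably) supply the poset details. One small slip: a closed walk with nonzero imbalance need not contain a cycle with \emph{the same} imbalance, only one with \emph{some} nonzero imbalance---but your inductive splitting argument proves exactly that, so the proof goes through.
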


\begin{proof}
 Suppose $Q$ has a rank function $\rho$. Then $Q$ cannot have a pair
 $(v,w)$ of vertices with $v\to w$ and $v\leftarrow w$ and hence is
 asymmetric.  
 Let $(v_{0},\ldots,v_{n})$ be an undirected closed walk or cycle in $Q$.
 Then
 \[
  0 = \rho(v_{0})-\rho(v_{n}) = \sum_{t\in\Set{t|(v_{t},v_{t+1})\in
 Q_{1}}} 1 + \sum_{t\in\Set{t|(v_{t+1},v_{t})\in Q_{1}}}(-1)
 \]
 and we have
 \[
 \numof{\Set{t|(v_{t},v_{t+1})\in Q_{1}}} = \numof{\Set{t|(v_{t+1},v_{t})\in
 Q_{1}}}. 
 \]

 Conversely, suppose that the second condition is
 satisfied. Choose a vertex $v_{0}$. For a vertex $w\in Q_{0}$, choose
 an undirected walk $(v_{0},\ldots,v_{n}=w)$ from $v_{0}$ to $w$ and
 define
 \[
 \rho(w) = \numof{\Set{t|(v_{t},v_{t+1})\in Q_{1}}} -
 \numof{\Set{t|(v_{t+1},v_{t})\in Q_{1}}}. 
 \]
 The second condition guarantees that this is independent of the choice
 of a walk.

 Since any undirected closed walk can be decomposed into undirected
 cycles, the second and the third conditions are equivalent. 
 Finally the first and the fourth conditions are equivalent by definition.
\end{proof}

In order to obtain lower bounds of $\dim\DE(Q)$,
we consider the case of acyclic quivers.

%We define $\corank(Q)$ to be the number of weakly connected components of $Q$.
%We also define  $\rank(Q)$ to be $\rank(Q)=\numof{Q_0}-\corank(Q)$.

%\subsection{Main Theorems}
%First we discuss the dimension of a directed edge polytope.
%For a quiver $Q$,
%we define
%\cref{condition:fuga}
%as follows:
%\begin{condition}
%\label{condition:fuga}
%There exists $\rho\in\RR^{Q_0}$
%such that
%\begin{align*}
%  (v,w)\in Q_1 \implies \rho(v)+1=\rho(w).
%\end{align*}
%\end{condition}

%Next we discuss the lower bound of the dimension.
%\begin{lemma}
%\label{lem:dim:forest}
%If the underlying graph of an asymmetric quiver $Q$ is acyclic,
%then the dimension of the vector (resp.\ affine) space
%spanned by $\Set{\varepsilon_{(v,w)}|(v,w)\in Q_1}$ is $\rank(Q)$
%(resp.\ $\rank(Q)-1$).
%\end{lemma}

\begin{lemma}
 \label{lem:dim:forest}
 Let $F$ be a quiver whose underlying graph is acyclic.
 Then the dimension of the vector space spanned by
 $\set{\varepsilon_{(v,w)}|(v,w)\in F_{1}}$ is 
 given by $c(F)$.
 Thus we obtain
 \[
 \dim \DE(F) = \dim
 \aff\ideal{\varepsilon_{(v,w)}}{(v,w)\in F_{1}} = c(F) -1,  
 \]
 where $\aff$ denotes the affine hull.
\end{lemma}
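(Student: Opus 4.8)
The plan is to prove the dimension formula for a quiver $F$ whose underlying graph is acyclic (a \emph{polyforest}) by induction on $\numof{F_{1}}$, the number of edges. The base case is $\numof{F_{1}}=0$: then every vertex is its own connected component, so $c(F)=\numof{F_{0}}-\numof{F_{0}}=0$, the vector set $\Set{\varepsilon_{(v,w)}|(v,w)\in F_{1}}$ is empty, and both its span and its affine hull are $0$-dimensional (the affine hull of the empty set being empty, of dimension $-1$), matching $c(F)=0$ and $c(F)-1=-1$. For the inductive step I would remove a \emph{leaf edge}: since the underlying graph of $F$ is a forest with at least one edge, it has a vertex $w$ of degree one, incident to a unique edge $e$, which is either $(v,w)$ or $(w,v)$ for some $v$. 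Let $F'$ be the lluf subquiver with $F'_{1}=F_{1}\setminus\{e\}$.

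The key step is to compare $c(F)$ with $c(F')$ and the span of the $\varepsilon$'s accordingly. Removing $e$ isolates $w$ as a new connected component while leaving all other components unchanged, so $\numof{\pi_{0}(F')}=\numof{\pi_{0}(F)}+1$ and hence $c(F')=c(F)-1$. On the vector-space side, $\varepsilon_{e}=\pm(\kappa_{\set{v}}-\kappa_{\set{w}})$ is the only generator involving the basis vector $\kappa_{\set{w}}$, because $w$ has degree one in $F$; therefore $\varepsilon_{e}$ is not in the span of $\Set{\varepsilon_{(x,y)}|(x,y)\in F'_{1}}$, and the span for $F$ has dimension exactly one more than the span for $F'$. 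Combining with the inductive hypothesis $\dim\operatorname{span}\Set{\varepsilon_{(x,y)}|(x,y)\in F'_{1}}=c(F')=c(F)-1$ gives the claimed $\dim\operatorname{span}\Set{\varepsilon_{(v,w)}|(v,w)\in F_{1}}=c(F)$.

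For the affine-hull statement, I would use the general fact that for a finite set $S$ of vectors, $\dim\aff(S)=\dim\operatorname{span}(S)$ if $\mathbf{0}\notin\aff(S)$, and $\dim\aff(S)=\dim\operatorname{span}(S)-1$ otherwise; since every $\varepsilon_{(v,w)}$ lies in the hyperplane $\kappa_{F_{0}}^{\perp}$ through the origin (each $\varepsilon_{(v,w)}$ has coordinate sum zero, cf.\ the argument in \Cref{lemma:ocs:constant}) but is itself nonzero, and $F$ has at least one edge, the origin does \emph{not} lie in $\aff\Set{\varepsilon_{(v,w)}|(v,w)\in F_{1}}$ — one can see this because a rank function exists on any polyforest (assign to each vertex its signed distance along the tree from a fixed basepoint, as in the proof of \Cref{prop:fuga}), so by \Cref{lemma:ocs:rank} the affine hull lies in the affine hyperplane $H_{\rho}$ which misses the origin. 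Hence $\dim\aff=\dim\operatorname{span}-1=c(F)-1$, and since $\DE(F)$ spans its own affine hull we get $\dim\DE(F)=c(F)-1$.

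The main obstacle I anticipate is the bookkeeping at the boundary of the induction — getting the degenerate cases right (empty edge set, a component that is a single vertex with no edges, the convention $\dim\aff(\emptyset)=-1$) and making sure the "leaf edge not in the span of the rest" claim is argued cleanly rather than hand-waved, since it is the crux of the whole computation. A secondary point to handle carefully is the clean statement of the span-versus-affine-hull dimension relation and verifying the no-origin-in-affine-hull condition; invoking the existence of a rank function on a polyforest via \Cref{lemma:ocs:rank} is the cleanest route and avoids a separate ad hoc argument.
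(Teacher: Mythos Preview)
Your leaf-peeling induction is essentially the paper's argument in different packaging: the paper reduces to a connected tree via additivity of $\rank I(F)$, then orders the vertices so that each $v_i$ is a leaf of the full subquiver on $\{v_i,\ldots,v_n\}$, reading off $\rank I(F)=\numof{F_0}-1$ from the resulting triangular shape of the incidence matrix; you instead induct directly on $\numof{F_1}$ over the whole forest, which is the same computation done edge-by-edge rather than component-by-component.

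There is one genuine slip to fix. You have the span/affine-hull dichotomy stated backwards: the correct statement is that $\dim\aff(S)=\dim\operatorname{span}(S)-1$ when $0\notin\aff(S)$, and $\dim\aff(S)=\dim\operatorname{span}(S)$ when $0\in\aff(S)$ (indeed, if $0\in\aff(S)$ then $\aff(S)$ is already a linear subspace and hence equals $\operatorname{span}(S)$). You then argue---correctly, via the rank function and \Cref{lemma:ocs:rank}---that $0\notin\aff(S)$, and conclude $\dim\aff=\dim\operatorname{span}-1=c(F)-1$. That conclusion is right, but it follows from the \emph{correct} version of the fact, not the version you wrote; as written your two displayed cases contradict your own conclusion. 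Also, the aside about $\kappa_{F_0}^\perp$ does no work: lying in a linear hyperplane through the origin and being nonzero does not by itself exclude the origin from the affine hull (e.g.\ $\{\ee_1,-\ee_1\}$). The rank-function argument is the one that actually carries the claim, and it matches what the paper asserts (``since the origin is not contained in the affine hull'') without proof.
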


\begin{proof}
 Recall that $I(F)$ is the incidence matrix of $F$. Then the dimension
 of the vector space spanned by 
 $\Set{\varepsilon_{(v,w)}|(v,w)\in F_{1}}$ is $\rank I(F)$.
 By the additivity of the rank of the incidence matrix with respect to
 disjoint unions, it suffices to prove that
 $\rank I(F)=\numof{F_{0}}-1$, when $F$ is connected.

 By the acyclicity assumption, the underlying graph of $F$ is a tree. 
 Let $n=\numof{F_{0}}$. We may choose an ordering
 $F_{0}=\{v_{1},\ldots,v_{n}\}$ in such a way that the underlying graph
 of the full subquiver $F^{(i)}$ with vertices $\{v_{i},\ldots,v_{n}\}$ is 
 a tree for each $i=1,\ldots,n-1$.
 In other words, $v_{i}$ is connected to a vertex in
 $\{v_{i+1},\ldots,v_{n}\}$ by a unique edge for each
 $i=1,\ldots,n-1$. 
 With this ordering, $I(F^{(i)})$ is of the form
 \[
  I(F^{(i)}) = 
 \begin{pmatrix}
  \pm 1 & 0 & \cdots & 0 \\
  0 & \\
  \vdots & \\
  0 & \\
  \mp 1 & & I(F^{(i+1)})& \\
 0 & \\
  \vdots & \\
  0 & 
 \end{pmatrix}
 \]
 and we obtain $\rank I(F)=n-1=\numof{F_{0}}-1$ by induction.

 Since the origin is not contained in the affine hull, we have
 \begin{align*}
 \dim\conv\ideal{\varepsilon_{(v,w)}}{(v,w)\in F_{1}} & = \dim
 \aff\ideal{\varepsilon_{(v,w)}}{(v,w)\in F_{1}} \\
  & = c(F)-1.  
 \end{align*}
\end{proof}
%\begin{proof}
%  For each weakly connected component $\check Q$ of $Q$,
%  we have the following:
%  Let $\numof{\check Q_0}=n$.
%  Since the underlying graph of $\check Q$ is a tree,
%  we can assume that $Q_0=\Set{v_1,\ldots,v_n}$
%  satisfies the following:
%  the underlying graph of the full subquiver of $\check Q$ with respect to 
%  $\Set{v_i,\ldots,v_n}$
%  is a tree for each $i=1,\ldots,n-1$.
%  For each $i=1,\ldots,n-1$,
%  there exists a unique edge $a_i$ connecting $v_i$ 
%  and a vertex in $\Set{v_{i+1},\ldots,v_n}$.
%  It is easy to show that
%  the rank of the incidence matrix of $\check Q$
%  with respect to these orderings
%  is full.
%  Hence the rank of the incidence matrix of $\check Q$
%  is $\rank(\check Q)$.
%
%  Since the rank of the incidence matrix of $\check Q$ is $\rank(\check Q)$
%  for each weakly connected component,
%  it follows that
%  the rank of the incidence matrix of $Q$ is $\rank(Q)$,
%  which implies the lemma.
%\end{proof}

%Since a quiver  $Q$
%has a spanning polyforest in $Q$,
%that is a weakly asymmetric subquiver whose underlying graph is acyclic,
%we obtain the following:
\begin{corollary}
\label{lem:dim:lowerbound}
 For any quiver $Q$, we have $\dim(\DE(Q))\geq c(Q)-1$.
\end{corollary}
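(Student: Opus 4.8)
The plan is to reduce to the acyclic case already settled in \cref{lem:dim:forest} by passing to a spanning polyforest of $Q$.

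First I would construct, for an arbitrary quiver $Q$, a spanning polyforest $F$. Work in each connected component of $Q$ separately; its underlying graph is connected, so choose a spanning tree of that underlying graph, and for each edge $\{v,w\}$ of this tree choose one directed edge of $Q$ lying over it (at least one of $(v,w)$, $(w,v)$ lies in $Q_{1}$ by the definition of the underlying graph; pick either one if both do). Let $F$ be the lluf subquiver of $Q$ whose edge set is the set of all chosen edges. Since at most one direction is selected over each tree edge and a tree has no repeated edges, $F$ is asymmetric, and its underlying graph is the disjoint union of the chosen spanning trees, hence acyclic; thus $F$ is indeed a spanning polyforest, and in particular its underlying graph is acyclic.

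Next I would record that $c(F) = c(Q)$. Because $F$ meets each connected component of $Q$ in a spanning tree of that component, which is connected, we have $\pi_{0}(F) = \pi_{0}(Q)$ while $F_{0} = Q_{0}$, so $c(F) = \numof{F_{0}} - \numof{\pi_{0}(F)} = \numof{Q_{0}} - \numof{\pi_{0}(Q)} = c(Q)$. Finally, since $F_{1}\subseteq Q_{1}$, every vertex of $\DE(F)$ is a vertex of $\DE(Q)$, so $\DE(F)\subseteq \DE(Q)$ and hence $\dim\DE(Q)\ge \dim\DE(F)$. Applying \cref{lem:dim:forest} to $F$ gives $\dim\DE(F) = c(F)-1 = c(Q)-1$, which is the desired lower bound.

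I do not expect a genuine obstacle here; the only points requiring a little care are verifying that the subquiver just built really qualifies as a spanning polyforest (in particular that it is asymmetric, so that a spanning polytree exists in each component) and that replacing each component by a spanning tree leaves the coconnectivity unchanged. Together with \cref{lemma:ocs:constant} and \cref{cor:dim:upperbound}, this corollary sandwiches $\dim\DE(Q)$ and thereby proves \cref{thm:dim}.
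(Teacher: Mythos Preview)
Your proof is correct and follows essentially the same approach as the paper: choose a spanning polyforest $F$, apply \cref{lem:dim:forest} to get $\dim\DE(F)=c(F)-1$, and use $\DE(F)\subset\DE(Q)$ together with $c(F)=c(Q)$; you are simply more explicit than the paper about constructing $F$ and verifying the equality of coconnectivities. One small correction to your closing remark: this corollary together with \cref{lemma:ocs:constant} and \cref{cor:dim:upperbound} does not yet pin down $\dim\DE(Q)$ in the non-ranked case, since it only gives $c(Q)-1\le\dim\DE(Q)\le c(Q)$; the paper also needs \cref{lem:dim:lowerbound1} to finish \cref{thm:dim}.
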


\begin{proof}
  Choose a spanning polyforest $F$ in $Q$.
 Then we have
 \[
 \dim (\DE(Q)) \ge \dim (\DE(F)) = c(F)-1 \ge c(Q)-1,
 \]
 since $R_{0}=Q_{0}$.
%  If follows from \cref{lem:dim:forest}
%  that $\dim(\DE(F))\geq\rank(Q)-1$.
%  Since $\DE(F)\subset \DE(Q)$,
%  $\dim(\DE(Q))\geq\rank(Q)-1$.
\end{proof}

%Hence, e.g., an asymmetric quiver without undirected cycles
%has a rank function.

%We have the following equation for dimension of a direct edge polytope,
%which is proved in \cref{sec:proof}.
%\begin{theorem}
%  For a quiver $Q$,
%\begin{align*}
%\dim(\DE(Q))=
%  \begin{cases}
%\rank(Q)-1 &(\text{$Q$ satisfies \cref{condition:fuga}})\\
%\rank(Q)  &(\text{otherwise}).
%\end{cases}
%\end{align*}
%\end{theorem}
%

%
%\begin{lemma}
%\label{lemma:ocs:rank}
%Let $Q$ be a quiver.
%If $\rho \in \RR^{Q_0}$ satisfies
%$\rho(v)+1=\rho(w)$ for all $(v,w)\in Q_1$,
% then
%$\DE(Q)\subset \rho^\perp + \varepsilon_{(v,w)}$
%for $(v,w) \in \RR^{Q_0}$.
%\end{lemma}
%\begin{proof}
%  For each $(v,w)\in Q_1$,
%  it follows from direct calculation that
%  $\Braket{\rho,\varepsilon_{(v,w)}}=\rho(v)-\rho(w)=-1$.
%  Hence, for  $(v,w), (v',w')\in Q_1$,
%  we have $\Braket{\rho,\varepsilon_{(v',w')}-\varepsilon_{(v,w)}}=0$,
%  which implies
%  $\varepsilon_{(v',w')} \in \rho^\perp + \varepsilon_{(v,w)}$.
%  Since $\rho^\perp + \varepsilon_{(v,w)}$ contains
%  all vertices of $\DE(Q)$ 
%  the space also contains the convex hull $\DE(Q)$.
%\end{proof}

%We have the following as a corollary to 
%\cref{lemma:ocs:constant,lemma:ocs:rank}
%\begin{cor}
%  \label{cor:dim:upperbound}
%  If $Q$ satisfies \cref{condition:fuga},
%  then $\dim(\DE(Q))\leq\rank(Q)-1$.
%  If $Q$ does not satisfy \cref{condition:fuga},
%  then $\dim(\DE(Q))\leq\rank(Q)$.  
%\end{cor}

For those quivers that do not have rank functions, we have the following
lower bound.

\begin{lemma}
  \label{lem:dim:lowerbound1}
  If $Q$ does not have a rank function,
  then
  $\dim(\DE(Q))\geq c(Q)$.
\end{lemma}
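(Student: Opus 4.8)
The plan is to exhibit an affine‐independent set of $c(Q)+1$ vertices of $\DE(Q)$, which by definition forces $\dim(\DE(Q))\ge c(Q)$. Since dimension is additive over connected components (the hyperplanes $\kappa_{R_0}^{\perp}$ intersect transversally, as in \cref{lemma:ocs:constant}, and a rank function on $Q$ is the same data as a rank function on each component together with the freedom to shift), it suffices to treat the case where $Q$ is connected and has no rank function; then I must produce $|Q_0|$ affinely independent vertices.

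First I would choose a spanning polytree $F$ in $Q$; by \cref{lem:dim:forest} the $|Q_0|-1$ vectors $\{\varepsilon_{(v,w)} : (v,w)\in F_1\}$ are affinely independent and span an affine subspace of dimension $|Q_0|-2$ not containing the origin, in fact they span a linear subspace of dimension $|Q_0|-1$. The key point is that since $Q$ has no rank function, by \cref{prop:fuga} there is an undirected cycle $(v_0,\ldots,v_n=v_0)$ in $Q$ along which the number of forward edges differs from the number of backward edges; pushing this cycle into $F$ (adding each of its edges to $F$ in turn) one finds an edge $e=(a,b)\in Q_1$ for which $\varepsilon_{(a,b)}$ is \emph{not} in the affine hull of $\{\varepsilon_{(v,w)} : (v,w)\in F_1\}$. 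More precisely: the $|Q_0|-1$ vectors from $F$ together with $\varepsilon_e$ are linearly independent — if they were dependent, then $\varepsilon_e$ would lie in their span, and reading off coefficients along the unique cycle created in $F\cup\{e\}$ would produce a rank function on that cycle, i.e.\ equal forward/backward counts, contradicting the choice of cycle. Hence $\{\varepsilon_{(v,w)} : (v,w)\in F_1\}\cup\{\varepsilon_e\}$ is a set of $|Q_0|$ linearly independent vectors, and since none of them is the origin, they are also affinely independent (an affine dependence among $k$ vectors that are linearly independent would force all coefficients to be zero). Therefore $\DE(Q)\supset \DE(F\cup\{e\})$ has dimension at least $|Q_0|-1=c(Q)$.

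To make the middle step precise I would argue as follows. Write $C$ for the edge set of the chosen undirected cycle and orient it as $v_0\to v_1\to\cdots\to v_n=v_0$, so that $\sum_{t}\pm\varepsilon_{(v_t,v_{t+1})}=0$ where the sign is $+$ for a forward edge $(v_t,v_{t+1})\in Q_1$ and $-$ for a backward one $(v_{t+1},v_t)\in Q_1$, and the number of $+$ signs does not equal the number of $-$ signs. Not every edge of $C$ can lie in $F$ (a tree has no cycle), so pick $e\in C\setminus F_1$; adding $e$ to $F$ creates a unique undirected cycle $C'$ whose edges other than $e$ lie in $F$. If $\varepsilon_e$ were a linear combination of $\{\varepsilon_{f} : f\in F_1\}$, the relation would be supported on the edges of $C'$ (by uniqueness of the cycle and the structure of the incidence matrix in \cref{lem:dim:forest}), giving integer coefficients $\pm1$ summing $\varepsilon_e$ as an alternating sum around $C'$; but that is exactly the statement that $C'$ has equal forward and backward edge counts once oriented consistently. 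Iterating over the (possibly several) edges of $C$ missing from $F$, the cycles $C'$ so produced have edge sets whose symmetric difference is $C$, and the forward/backward balance is additive under symmetric difference, so if every such $C'$ were balanced then $C$ itself would be balanced — contradiction. Hence for at least one choice of $e\in C\setminus F_1$ the vector $\varepsilon_e$ is independent of $\{\varepsilon_f : f\in F_1\}$, completing the argument.

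The main obstacle I anticipate is the bookkeeping in that last paragraph: correctly matching the sign conventions of the incidence matrix with the forward/backward edge counts of \cref{prop:fuga}, and verifying that "balance" is additive under symmetric difference of cycles with the induced orientations. A cleaner route, which I would try first, is to bypass cycles entirely: define $\rho\in\RR^{Q_0}$ by declaring $\rho$ constant on the kernel of the map sending $Q_0$ into the quotient of $\RR^{Q_0}$ by $\operatorname{span}\{\varepsilon_f : f\in F_1\}$ and then observe directly that $\DE(Q)$ lies in an affine hyperplane of the claimed codimension \emph{only if} a rank function exists; contrapositively, the absence of a rank function means $\DE(Q)$ is not contained in any translate of $\operatorname{span}\{\varepsilon_f : f\in F_1\}$, so it has dimension at least $c(Q)$. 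Either way the essential content is the equivalence in \cref{prop:fuga} between "has a rank function" and the cycle‐balance condition.
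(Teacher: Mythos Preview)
Your central claim—that $\{\varepsilon_{f} : f\in F_{1}\}\cup\{\varepsilon_{e}\}$ is \emph{linearly} independent—is false, and this breaks the argument as written. For a spanning polytree $F$ the vectors $\{\varepsilon_{f}:f\in F_{1}\}$ already span the hyperplane $\kappa_{Q_{0}}^{\perp}$ (this is exactly \cref{lem:dim:forest}), and every $\varepsilon_{e}$ with $e\in Q_{1}$ lies in that hyperplane. Concretely, if $e=(a,b)$ and $P$ is the unique $F$-path from $a$ to $b$, then $\varepsilon_{e}=\sum_{f\in P}\pm\varepsilon_{f}$; so linear dependence always holds, regardless of whether $Q$ has a rank function. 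Your deduction ``linearly independent $\Rightarrow$ affinely independent'' is therefore vacuous. The same confusion appears in your ``cleaner route'': $\DE(Q)$ \emph{is} contained in $\operatorname{span}\{\varepsilon_{f}:f\in F_{1}\}=V_{F}$, so escaping that span is not the issue.

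What you actually need is that $\varepsilon_{e}$ lies outside the \emph{affine} hull of $\{\varepsilon_{f}:f\in F_{1}\}$. Since the linear relation above is unique, this happens precisely when the sum of its coefficients is not $1$, i.e.\ when $\rho(a)+1\neq\rho(b)$ for the rank function $\rho$ of $F$. The paper exploits this directly and avoids cycles altogether: take any rank function $\rho$ on the spanning polyforest $F$; since $Q$ has no rank function, there is an edge $(v,w)\in Q_{1}$ with $\rho(v)+1\neq\rho(w)$, whence $\langle\rho,\varepsilon_{(v,w)}-\varepsilon_{(v_{0},w_{0})}\rangle\neq 0$ and $\varepsilon_{(v,w)}\notin H_{\rho}\supset\aff(\DE(F))$. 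Your symmetric-difference detour through an unbalanced cycle $C$ is salvageable (it eventually locates such an edge), but it is doing extra work to reprove that $\rho$ fails somewhere on $Q_{1}$, which is immediate from the hypothesis.
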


\begin{proof}
 Let $F$ be spanning polyforest of $Q$. Then $F_{0}=Q_{0}$ and
 $\numof{\pi_{0}(F)} = \numof{\pi_{0}(Q)}$.
 By \cref{lem:dim:forest}, we have $\dim(\DE(F)) = c(F)-1=c(Q)-1$.

 Since the underlying graph of
 $F$ is acyclic, $F$ has a rank function $\rho$.
 Let $H_{\rho}$ be the hyperplane in \cref{lemma:ocs:rank}. It is
 given by
 \[
  H_{\rho} = \rho^{\perp} + \varepsilon_{(v_{0},w_{0})}
 \]
 for an edge $(v_{0},w_{0})$ in $F$.
 Since $Q$ does not have a rank function, there exists an edge $(v,w)$
 in $Q$ such that $\rho(v)+1\neq \rho(w)$. Then we have
 \begin{align*}
  \langle \rho, \varepsilon_{(v,w)}-\varepsilon_{(v_{0},w_{0})}\rangle & =
  \rho(v)-\rho(w) - (\rho(v_{0})-\rho(w_{0})) \\
  & = \rho(v)-\rho(w)+1 \neq 0,
 \end{align*}
 which implies that the vertex
 $\varepsilon_{(v,w)}$ is not contained in the hyperplane $H_{\rho}$.
 It is not contained in any one of hyperplanes of the form
 $\kappa_{G_{0}}$ for a connected component $G$
 of $F$, either. In other words,
 $\varepsilon_{(v,w)}\not\in H_{\rho}\cap V_{F}$.
 Since $ \DE(F) \subset H_{\rho} \cap V_{F}$, 
 we have $\dim(\DE(Q)) \ge \dim(\DE(F))+1 = c(Q)$.
\end{proof}
%\begin{proof}
%  Let $F$ be a spanning polyforest in $Q$.
%  Since the underlying graph of $F$ is acyclic,
%  we have $\rho\in \RR^{Q_0}$ satisfying
%  \begin{align*}
%    (v,w)\in F_1\implies \rho(v)+1=\rho(w).
%  \end{align*}
%  Let $(v_0,w_0)\in F_1$.
%  By \cref{lemma:ocs:constant,lemma:ocs:rank},
%  $\DE(F)$ is a $(\rank(Q)-1)$-dimensional polytope contained by
%  $\rho^\perp + \varepsilon_{(v_0,w_0)}$
%  and $\kappa_{\check Q_0}^\perp$ for all weakly connected component of $Q$.
%  Since $Q$ does not satisfy \cref{condition:fuga},
%  there exists $(v,w)\in Q_1$ such that
%  $\rho(v)+1\neq\rho(w)$.
%  Hence we have
%  \begin{align*}
%  \Braket{\rho,\varepsilon_{(v,w)}-\varepsilon_{(v_0,w_0)}}
%  &=(\rho(v)-\rho(w))-(\rho(v_0)-\rho(w_0))\\
%  &=\rho(v)-\rho(w)+1 \neq 0,
%  \end{align*}
%  which implies
%  $\rho^\perp + \varepsilon_{(v_0,w_0)}$
%  does not contain $\varepsilon_{(v,w)}$.
%  Let $S$ be 
%  the intersection of 
%  $\rho^\perp + \varepsilon_{(v_0,w_0)}$
%  and $\kappa_{\check Q_0}^\perp$ for all weakly connected component of $Q$.
%  Since $S$ 
%  is an affine space of dimension $\rank(Q)-1$ such that
%  $S$ contains $\DE(F)$ and $S$ does not contain $\DE(Q)$,
%  we have $\dim(\DE(Q))\geq\rank(Q)$.
%\end{proof}

Now \Cref{thm:dim} follows from 
\cref{cor:dim:upperbound}, \cref{lem:dim:lowerbound}, 
\cref{lemma:ocs:constant} and 
\cref{lem:dim:lowerbound1}.

\section{Facets}
\label{sec:facet}
%Here we consider the dimension and facets of a directed edge polytope.
%To describe dimensions, we define \cref{condition:fuga}, and show
%\cref{thm:dim}. 
%To characterize facets,
%we introduce \cref{condition:piyo,condition:hogera},
%and show \cref{thm:facet}.

Let $R$ be a lluf subquiver of $Q$ so that both $\DE(R)$
and $\DE(Q)$ are contained in $\RR^{Q_{0}}$. In order to prove
\cref{thm:facet}, we would like to know when
$\DE(R)$ is a face of $\DE(Q)$ and $\dim (\DE(R))=\dim(\DE(Q))-1$.

We first obtain the following relation between the coconnectivities of
$Q$ and $R$ by the dimension condition.

\begin{lemma}
 \label{lem:coconnectivity}
 Let $R$ be a lluf subquiver of $Q$. If $\DE(R)$ is a facet of $\DE(Q)$,
 then $c(R)=c(Q)$ or $c(R)=c(Q)-1$.
 Thus $\numof{\pi_{0}(R)}=\numof{\pi_{0}(Q)}$ or 
 $\numof{\pi_{0}(R)}=\numof{\pi_{0}(Q)}+1$. 
\end{lemma}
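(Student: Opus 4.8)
The plan is to derive this purely from \cref{thm:dim}, together with one elementary observation: if $Q$ has a rank function $\rho\in\RR^{Q_{0}}$, then every lluf subquiver $R$ of $Q$ also has one, namely the same $\rho$. Indeed, $R$ lluf means $R_{0}=Q_{0}$ and $R_{1}\subseteq Q_{1}$, so the identity $\rho(v)+1=\rho(w)$, valid for all $(v,w)\in Q_{1}$, holds in particular for all $(v,w)\in R_{1}$. Hence ``having a rank function'' is inherited by lluf subquivers.

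Now I would split into two cases according to whether $Q$ has a rank function, using that \cref{thm:dim} gives $\dim\DE(X)=c(X)-1$ when $X$ has a rank function and $\dim\DE(X)=c(X)$ otherwise, and that the facet hypothesis gives $\dim\DE(R)=\dim\DE(Q)-1$. \emph{Case 1: $Q$ has a rank function.} Then so does $R$ by the observation, so $c(R)-1=\dim\DE(R)=\dim\DE(Q)-1=(c(Q)-1)-1$, forcing $c(R)=c(Q)-1$. \emph{Case 2: $Q$ has no rank function.} Then $\dim\DE(Q)=c(Q)$, so $\dim\DE(R)=c(Q)-1$; since $\dim\DE(R)$ equals either $c(R)-1$ or $c(R)$, we get $c(R)=c(Q)$ or $c(R)=c(Q)-1$. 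In both cases $c(R)\in\{c(Q)-1,c(Q)\}$. Finally, since $R$ is lluf, $\numof{R_{0}}=\numof{Q_{0}}$, so
\[
\numof{\pi_{0}(R)}-\numof{\pi_{0}(Q)}=\bigl(\numof{Q_{0}}-c(R)\bigr)-\bigl(\numof{Q_{0}}-c(Q)\bigr)=c(Q)-c(R)\in\{0,1\},
\]
which is the second assertion.

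I do not expect a genuine obstacle here; the one point that must not be overlooked is the rank-function inheritance used in Case 1. Without it, \cref{thm:dim} alone would also permit $c(R)=c(Q)-2$ (a quiver with a rank function can well have a lluf subquiver with none), and it is precisely this observation that excludes that spurious case. As an independent sanity check one may note that the connected components of $R$ refine those of $Q$, so $c(R)\le c(Q)$ holds unconditionally, consistent with the conclusion.
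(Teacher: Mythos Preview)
Your proof is correct and follows essentially the same route as the paper's: split on whether $Q$ has a rank function, use the inheritance of rank functions by lluf subquivers in the first case, and apply \cref{thm:dim} in both cases. One small slip in your commentary: the parenthetical ``a quiver with a rank function can well have a lluf subquiver with none'' is precisely what your own inheritance observation rules out; presumably you meant the reverse implication (a quiver \emph{without} a rank function may have a lluf subquiver \emph{with} one), which is indeed what drives the dichotomy in Case~2.
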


\begin{proof}
 When $Q$ has a rank function, so does $R$. And we have
 \[
 c(R)-1 = \dim(\DE(R)) = \dim(\DE(Q))-1 = c(Q)-2
 \]
 by \cref{thm:dim}, or $c(R)=c(Q)-1$.
 If $Q$ does not have a rank function, $\dim(\DE(Q))=c(Q)$, and we have
 $c(R)=c(Q)$ or $c(R)=c(Q)-1$, depending on the existence of a rank
 function on $R$.
\end{proof}

One of sufficient conditions for $\DE(R)$ being a facet is the
acyclicity of the quiver $Q/R$ obtained from $Q$ by ``contracting''
$R$. 

\begin{definition}
 \label{def:contraction}
 Let $R$ be a subquiver of a quiver $Q$ such that each
 connected component of $R$ is a full subquiver of
 $Q$. Define an equivalence relation $\sim$ on
 $Q_{0}$ by 
 \[
  v \sim w \Longleftrightarrow \text{$v$ and $w$
 are connected by an undirected walk in $R$}.
 \]
 The equivalence class of $v\in Q_{0}$ is denoted by $[v]$.
 Define a quiver $Q/R$ by 
 \begin{align*}
  (Q/R)_{0} & = Q_{0}/_{\sim} \\
  (Q/R)_{1} & = \Set{([v],[w]) | (v,w)\in Q_{1}\setminus R_{1}}. 
 \end{align*}
\end{definition}
%\begin{definition}
% \label{def:contraction}
%For a subquiver $\check Q$ of $Q$
%such that
%each connected component of $\check Q$ is a full subquiver of $Q$,
%we define a quiver $Q/\check Q$ to be the quiver $(Q_0/\check Q_0,Q_1/\check Q_1)$
%obtained by
%\begin{align*}
%  Q_0/\check Q_0 &= \Set{[v]|v\in Q_0},\\
%  Q_1/\check Q_1 &= \Set{([v],[w])|(v,w)\in Q_1\setminus \check Q_1}, 
%\end{align*}
%where
%$[v]=\Set{w|\text{$v$ and $w$ is weakly connected in $(Q_0,\check Q_1)$}}$.
%\end{definition}

Roughly speaking, $Q/R$ is the quiver obtained from $Q$ by
collapsing each connected component of $R$ to a point.

\begin{lemma}
 \label{lem:hogeraisface}
 Let $R$ be a lluf proper subquiver of a quiver $Q$ such that
 each connected component of $R$ is a full subquiver of
 $Q$. If $Q/R$ is acyclic, then $\DE(R)$ is a face of
 $\DE(Q)$. 
\end{lemma}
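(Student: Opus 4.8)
The plan is to exhibit an explicit linear functional on $\RR^{Q_{0}}$ whose restriction to $\DE(Q)$ attains its maximum precisely on $\DE(R)$. Since $Q/R$ is a finite acyclic quiver, it admits a topological ordering: there is a function $\bar\rho\colon(Q/R)_{0}\to\RR$ with $\bar\rho([v])<\bar\rho([w])$ for every edge $([v],[w])\in(Q/R)_{1}$. (Such a $\bar\rho$ is obtained by repeatedly deleting a vertex with no incoming edge, which exists at each stage because the remaining quiver is still acyclic, and assigning strictly increasing values in the order of deletion; an edge $([v],[w])$ forces $[v]$ to be deleted before $[w]$.) Pulling $\bar\rho$ back along the quotient map $Q_{0}\to(Q/R)_{0}$, define $\ell\in\RR^{Q_{0}}$ by $\ell(v)=\bar\rho([v])$.

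Next I would evaluate $\ell$ on the vertices $\varepsilon_{(v,w)}$ of $\DE(Q)$. If $(v,w)\in R_{1}$, then $v$ and $w$ lie in the same connected component of $R$, so $[v]=[w]$ and $\langle\ell,\varepsilon_{(v,w)}\rangle=\ell(v)-\ell(w)=0$. If $(v,w)\in Q_{1}\setminus R_{1}$, I claim $[v]\neq[w]$: were $v\sim w$, the vertices $v$ and $w$ would lie in a common connected component of $R$, which by hypothesis is a full subquiver of $Q$, and then $(v,w)\in Q_{1}$ would force $(v,w)\in R_{1}$, a contradiction. Hence $([v],[w])$ is a genuine edge of $Q/R$ and $\langle\ell,\varepsilon_{(v,w)}\rangle=\bar\rho([v])-\bar\rho([w])<0$.

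Finally I would assemble these facts. For any point $x=\sum_{e\in Q_{1}}\lambda_{e}\varepsilon_{e}$ of $\DE(Q)=\conv\Set{\varepsilon_{(v,w)}|(v,w)\in Q_{1}}$ (with $\lambda_{e}\ge 0$, $\sum_{e}\lambda_{e}=1$), we get $\langle\ell,x\rangle=\sum_{e\notin R_{1}}\lambda_{e}\langle\ell,\varepsilon_{e}\rangle\le 0$, with equality exactly when $\lambda_{e}=0$ for every $e\in Q_{1}\setminus R_{1}$, that is, exactly when $x\in\conv\Set{\varepsilon_{(v,w)}|(v,w)\in R_{1}}=\DE(R)$. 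Thus the hyperplane $\Set{x\in\RR^{Q_{0}}|\langle\ell,x\rangle=0}$ supports $\DE(Q)$ and cuts out the face $\DE(R)$; since $R$ is proper there is at least one edge in $Q_{1}\setminus R_{1}$, so this hyperplane does not contain $\DE(Q)$ and $\DE(R)$ is a genuine face.

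\textbf{Main obstacle.} The only step requiring care is the claim $[v]\neq[w]$ for edges outside $R_{1}$, which is precisely where the hypothesis that each connected component of $R$ is a full subquiver of $Q$ is used (and is also what makes $Q/R$ a well-defined quiver, cf.\ \Cref{def:contraction}); everything else is a routine verification. It is worth noting that no rank function of $Q$ is needed here: the topological order supplied by the acyclicity of $Q/R$ does the job.
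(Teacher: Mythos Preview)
Your proof is correct and rests on the same key idea as the paper's: use the acyclicity of $Q/R$ to linearly order the connected components of $R$, and turn that order into supporting hyperplanes. The only difference is packaging. The paper orders the components as $R^{(1)},\ldots,R^{(n)}$, sets $C_{k}=\bigcup_{i\le k}R^{(i)}_{0}$, checks that each $\kappa_{C_{k}}^{\perp}$ is a supporting hyperplane of $\DE(Q)$, and then shows $\DE(Q)\cap\bigcap_{k}\kappa_{C_{k}}^{\perp}=\DE(R)$. You instead pull back a single topological-order function $\bar\rho$ to a single functional $\ell$ and use the one hyperplane $\ell^{\perp}$; in effect your $\ell$ is a strictly positive combination of the paper's $\kappa_{C_{k}}$'s (up to sign and an additive constant). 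Your version is a bit more economical, but the two arguments are essentially the same.
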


\begin{proof}
 Let us denote the connected components of $R$ by
 \[
  \pi_{0}(R) = \{R^{(1)},\ldots, R^{(n)}\}.
 \]
 Since $Q/R$ is acyclic, we may assume that, if
 $v\in R^{(i)}_{0}$ and $w\in R^{(j)}_{0}$ are connected
 by an edge in $Q$, then $i< j$.

 Denote $C_{k}= \bigcup_{i=1}^{k} R^{(i)}_{0}$.
 Then, for $v,w\in Q_{0}$,
 \begin{align*}
  \langle \kappa_{C_{k}},\varepsilon_{(v,w)}\rangle & =  
  \kappa_{C_{k}}(v) - \kappa_{C_{k}}(w) \\
  & = 
  \begin{cases}
   1 & (v\in C_{k} \text{ and } w\not\in C_{k}) \\
   -1 & (v\not\in C_{k} \text{ and } w\in C_{k}) \\
   0 & (\text{otherwise}).
  \end{cases}
 \end{align*}
 By our choice, the second case does not occur and we have
 $\langle\kappa_{C_{k}},\varepsilon_{(v,w)}\rangle\ge 0$.
 In other words, the orthogonal complement $\kappa_{C_{k}}^{\perp}$ is a
 supporting hyperplane of 
 $\DE(Q)$ and thus $\DE(Q)\cap\kappa_{C_{k}}^{\perp}$ is a face of
 $\DE(Q)$ for each $k$.

 We claim that
 \[
  \DE(Q)\cap \bigcap_{i=1}^{n} \kappa_{C_{k}}^{\perp} = \DE(R)
 \]
 or
 \[
 \Set{\varepsilon_{(v,w)}|(v,w)\in Q_{1}}
 \cap \bigcap_{i=1}^{n} \kappa_{C_{k}}^{\perp} =
 \Set{\varepsilon_{(v,w)}|(v,w)\in R}. 
 \]
 If $(v,w)\in R_{1}$, 
 $\langle \kappa_{C_{k}},\varepsilon_{(v,w)}\rangle =0$ for all
 $k$ by the previous calculation. Conversely, suppose that
 $(v,w)\in Q_{1}\setminus R_{1}$ with $v\in R^{(i)}_{0}$
 and $w\in R^{(j)}_{0}$. By assumption, $i<j$, which implies that
 \[
  \langle \kappa_{C_{i}},\varepsilon_{(v,w)}\rangle = 1-0=1\neq 0
 \]
 and we have $\varepsilon_{(v,w)}\not\in \kappa_{C_{i}}^{\perp}$.
\end{proof}

Another sufficient condition for being a face is the following.

\begin{lemma}
  \label{lem:piyoisface}
  Let $R$ be a lluf proper subquiver of a quiver $Q$.
  If $R$ has a rank function $\rho\in \RR^{Q_0}$ such that 
  \begin{align*}
    (\rho(v)-\rho(w)+1)(\rho(v')-\rho(w')+1)> 0
  \end{align*}
 for any $(v,w),(v',w')\in Q_{1}\setminus R_{1}$,
 then $\DE(R)$ is a face of $\DE(Q)$.
\end{lemma}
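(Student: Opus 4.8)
The plan is to exhibit an explicit supporting hyperplane of $\DE(Q)$ whose intersection with $\DE(Q)$ is exactly $\DE(R)$, taking the given rank function $\rho$ of $R$ as its normal vector. Since $R$ is a proper subquiver, $Q_{1}\setminus R_{1}\neq\emptyset$; I would fix one edge $(v_{0},w_{0})\in Q_{1}\setminus R_{1}$ and set $s=\sgn(\rho(v_{0})-\rho(w_{0})+1)$, which is well defined and nonzero by hypothesis. The stated product inequality forces $s=\sgn(\rho(v)-\rho(w)+1)$ for \emph{every} $(v,w)\in Q_{1}\setminus R_{1}$, i.e.\ all of these numbers share the single sign $s$; this is the one place where the sign bookkeeping matters, and it is what lets a single hyperplane work regardless of whether $\rho$ separates the omitted edges ``from above'' or ``from below''. (If $R_{1}=\emptyset$ there is nothing to do, since $\DE(R)=\emptyset$ is a face of every polytope; I would dispose of that degenerate case first and assume $R_{1}\neq\emptyset$ henceforth.)

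Next I would evaluate the linear functional $\phi(x)=s\,\langle\rho,x\rangle$ on the vertices of $\DE(Q)$, which by \cref{lem:vertices} are precisely the points $\varepsilon_{(v,w)}$ with $(v,w)\in Q_{1}$, indexed injectively by $Q_{1}$. For $(v,w)\in R_{1}$ the rank-function identity gives $\langle\rho,\varepsilon_{(v,w)}\rangle=\rho(v)-\rho(w)=-1$, hence $\phi(\varepsilon_{(v,w)})=-s$. For $(v,w)\in Q_{1}\setminus R_{1}$ one computes $\phi(\varepsilon_{(v,w)})=s(\rho(v)-\rho(w))=s(\rho(v)-\rho(w)+1)-s=\lvert\rho(v)-\rho(w)+1\rvert-s>-s$. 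Therefore $\phi$ attains the value $-s$ as its minimum over $\DE(Q)$, and attains this minimum exactly at the vertices $\varepsilon_{(v,w)}$ with $(v,w)\in R_{1}$.

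It follows that $H=\{x\in\RR^{Q_{0}}:\phi(x)=-s\}=\{x:\langle\rho,x\rangle=-1\}$ is a supporting hyperplane of $\DE(Q)$, so $\DE(Q)\cap H$ is a face of $\DE(Q)$. Since a face of a polytope is the convex hull of those vertices of the polytope lying on it, and by the previous paragraph those vertices are exactly the $\varepsilon_{(v,w)}$ with $(v,w)\in R_{1}$, we get $\DE(Q)\cap H=\conv\Set{\varepsilon_{(v,w)}|(v,w)\in R_{1}}=\DE(R)$, which is the desired conclusion.

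I do not anticipate a real obstacle: the argument is the standard ``minimize a linear functional over a polytope'' device, and apart from the common-sign bookkeeping for $s$ and the trivial case $R_{1}=\emptyset$, every step is routine and needs no input beyond \cref{lem:vertices} and the definition of $\DE$.
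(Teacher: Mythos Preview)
Your proof is correct and follows essentially the same approach as the paper: both exhibit the affine hyperplane $\{x:\langle\rho,x\rangle=-1\}$ (which the paper writes as $\rho^{\perp}+\varepsilon_{(v_0,w_0)}$ for $(v_0,w_0)\in R_1$) as a supporting hyperplane whose intersection with $\DE(Q)$ is $\DE(R)$. The only cosmetic differences are that you introduce the sign $s$ explicitly to phrase everything as a minimum, and you dispose of the degenerate case $R_1=\emptyset$ separately, whereas the paper simply says ``either all $\ge 0$ or all $\le 0$'' and tacitly assumes $R_1\neq\emptyset$.
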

\begin{proof}
 Suppose a lluf subquiver $R$ of $Q$
 has such a rank function $\rho$.
 Fix $(v_0,w_0)\in R_1$ and consider the hyperplane
 \[
  H_{\rho} = \rho^{\perp}+\varepsilon_{(v_{0},w_{0})}.
 \]
 We claim that $H_{\rho}$ is a supporting hyperplane of $\DE(Q)$.
 
 For $(v,w)\in Q_1$, we have
 \begin{align*}
  \Braket{\rho, \varepsilon_{(v,w)}-\varepsilon_{(v_0,w_0)}} = 
  \rho(v)-\rho(w) - \rho(v_{0})+\rho(w_{0}) = \rho(v)-\rho(w)+1.
 \end{align*}
 Note that $\Braket{\rho, \varepsilon_{(v,w)}-\varepsilon_{(v_0,w_0)}}=0$
 for $(v,w)\in R_{1}$.
 By our assumption on $\rho$ we have
 $\Braket{\rho, \varepsilon_{(v,w)}-\varepsilon_{(v_0,w_0)}}\ge 0$
 for any $(v,w)\in Q_{1}$ or
 $\Braket{\rho, \varepsilon_{(v,w)}-\varepsilon_{(v_0,w_0)}}\le 0$
 for any $(v,w)\in Q_{1}$
 and $H_{\rho}$ is a supporting hyperplane of $\DE(Q)$.

 It remains to show that $\DE(Q)\cap H_{\rho}=\DE(R)$.
 Again by the assumption on $\rho$,
  \begin{align*}
    \Braket{\rho, \varepsilon_{(v,w)}-\varepsilon_{(v_0,w_0)}}=
    \rho(v)-\rho(w)+1=0 \iff  (v,w)\in R_{1}
  \end{align*}
  for $(v,w)\in Q_1$.
  Hence the hyperplane
  $H_{\rho}$ contains $\DE(R)$,
  but the hyperplane does not contain $ \varepsilon_{(v,w)}$
  for any $(v,w)\in Q_1\setminus R_1$.
  Hence the face $\DE(Q)\cap H_{\rho}$ coincides with $\DE(R)$.
\end{proof}

We next consider necessary conditions for being facets.

\begin{lemma}
  \label{lem:facetisfull}
  Let $R$ be a subquiver of $Q$
  with $\numof{\pi_{0}(R)}=\numof{\pi_{0}(Q)}+1$.
  If $\DE(R)$ is a facet of $\DE(Q)$,
  then each connected component of $R$
  is a full subquiver of $Q$.
\end{lemma}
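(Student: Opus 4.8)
The plan is a proof by contradiction. Assume $\DE(R)$ is a facet of $\DE(Q)$, that $\numof{\pi_{0}(R)}=\numof{\pi_{0}(Q)}+1$, and --- contrary to the assertion --- that some connected component $R^{(i)}$ of $R$ is not full in $Q$, so that there is an edge $(v,w)\in Q_{1}\setminus R_{1}$ with $v,w\in R^{(i)}_{0}$. (As in the rest of this section we take $R$ to be lluf; isolated vertices affect neither $\DE(R)$ nor the conclusion.) Since a facet of a polytope is cut out by a supporting hyperplane, choose $\eta\in\RR^{Q_{0}}$ and $c\in\RR$ with $\Braket{\eta,x}\ge c$ for every $x\in\DE(Q)$ and with $\DE(R)$ equal to the intersection of $\DE(Q)$ with the hyperplane $\Braket{\eta,\cdot}=c$. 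Evaluating on vertices via \cref{lem:vertices}, this says $\eta(a)-\eta(b)=c$ for every $(a,b)\in R_{1}$, while $\eta(v)-\eta(w)>c$: indeed $\varepsilon_{(v,w)}$ is a vertex of $\DE(Q)$, and since a vertex of a polytope is not a convex combination of the remaining vertices, $\varepsilon_{(v,w)}\notin\DE(R)$, so $\varepsilon_{(v,w)}$ does not lie on the hyperplane.

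Next I would exploit connectivity inside $R^{(i)}$. Fix an undirected walk $(v=u_{0},u_{1},\dots,u_{\ell}=w)$ in $R^{(i)}$. Each step contributes $\eta(u_{t})-\eta(u_{t+1})\in\{+c,-c\}$, namely $+c$ if $(u_{t},u_{t+1})\in R_{1}$ and $-c$ if $(u_{t+1},u_{t})\in R_{1}$, so summing, $\eta(v)-\eta(w)=(p-q)c$, where $p$ and $q$ count the forward and backward steps respectively. Combined with $\eta(v)-\eta(w)>c$ this gives $(p-q-1)\,c>0$; in particular $c\neq 0$ and $p-q\neq 1$. Since $c\neq 0$, the function $-\eta/c$ satisfies $(-\eta/c)(a)+1=(-\eta/c)(b)$ for all $(a,b)\in R_{1}$, i.e.\ is a rank function of $R$, so \cref{thm:dim} gives $\dim\DE(R)=c(R)-1$. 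As $R_{0}=Q_{0}$, we get $c(R)=\numof{Q_{0}}-\numof{\pi_{0}(R)}=\numof{Q_{0}}-\numof{\pi_{0}(Q)}-1=c(Q)-1$, hence $\dim\DE(R)=c(Q)-2$. Since $\DE(R)$ is a facet, $\dim\DE(Q)=\dim\DE(R)+1=c(Q)-1$, and then \cref{thm:dim} forces $Q$ itself to have a rank function $\sigma$.

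Finally I would weigh the same walk with $\sigma$, now regarded as a walk in $Q$: since $R_{1}\subset Q_{1}$, a forward step contributes $\sigma(u_{t})-\sigma(u_{t+1})=-1$ and a backward step $+1$, so $\sigma(v)-\sigma(w)=q-p$. But $(v,w)\in Q_{1}$ gives $\sigma(v)-\sigma(w)=-1$, hence $p-q=1$, contradicting $p-q\neq 1$ from the previous paragraph. This contradiction shows that every connected component of $R$ is a full subquiver of $Q$.

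The crux --- and the step I expect to require the most care --- is recognizing that the walk in $R^{(i)}$ from $v$ to $w$ gets ``measured'' in two incompatible ways. Being constant on the edges of $R$, the facet normal $\eta$ forces the net number of forward-minus-backward steps of the walk to differ from $1$, precisely because the genuine edge $(v,w)$ is not on the face; but the dimension count, via \cref{thm:dim}, forces $Q$ to carry a rank function $\sigma$, under which that same net number must equal $1$, precisely because $(v,w)$ is a genuine edge. Apart from this idea, the work is in passing from ``$\DE(R)$ is a facet'' to ``$Q$ has a rank function'' through the dimension formula, and in keeping the sign conventions of rank functions and of the supporting halfspace straight.
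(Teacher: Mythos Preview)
Your proof is correct, and it takes a somewhat different route from the paper's. The paper argues directly rather than by contradiction: it identifies the supporting hyperplane of the facet $\DE(R)$ inside $V_{Q}$ explicitly---as $V_{R}$ when $R$ has no rank function, and as $H_{\rho}\cap V_{R}$ when $R$ (and hence $Q$, again via \cref{thm:dim}) has a rank function $\rho$---and then checks that $\varepsilon_{(v,w)}$ lies in that hyperplane whenever $v,w$ share a component of $R$, forcing $\varepsilon_{(v,w)}\in\DE(R)$ and therefore $(v,w)\in R_{1}$. You instead take an abstract supporting functional $\eta$ and run a walk in $R^{(i)}$ twice: first measured by $\eta$ to get $c\neq 0$ (so $-\eta/c$ is a rank function on $R$, which sidesteps the paper's case split) and $p-q\neq 1$; then measured by the rank function $\sigma$ on $Q$ produced by \cref{thm:dim} to get $p-q=1$. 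The paper's explicit description of the hyperplane via $V_{R}$ and $H_{\rho}$ is reused in the very next lemma (\cref{lem:facetisacyclic}), which is what that route buys; your argument is more self-contained and eliminates the case analysis. One small point worth making explicit in your write-up: once $c\neq 0$, the rank function $-\eta/c$ forces $R$ to be asymmetric by \cref{prop:fuga}, so the forward/backward counts $p,q$ along the walk are unambiguously defined.
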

\begin{proof}
 Let $n=\numof{\pi_{0}(R)}=\numof{\pi_{0}(Q)}+1$ and
 $R^{(1)},\ldots, R^{(n)}$ be the complete list of
 connected components of $R$ so that
 \[
  V_{R} = \bigcap_{k=1}^{n} \kappa_{R^{(k)}}^{\perp}.
 \]
 Let $(v,w)\in Q_1$ satisfy $v,w\in R^{(k)}_0$ for some $k$.
 It suffices to show that $\varepsilon_{(v,w)}\in \DE(R)$, since it is
 equivalent to $(v,w)\in R_{1}$ by \cref{lem:vertices}.

 By \cref{lemma:ocs:constant}, $V_{R}$ is a $c(R)$-dimensional affine
 space, which is a hyperplane in $V_{Q}$ by the assumption
 $c(R)=c(Q)-1$.  
 We have $\varepsilon_{(v,w)} \in V_{R}$, since
 \begin{align*}
  \kappa_{R^{(i)}_{0}}(\varepsilon_{(v,w)}) & =
  \begin{cases}
   1-1 & (i=k) \\
   0-0 & (i\neq k)
  \end{cases} \\
  & = 0
 \end{align*}
 for any $i=1,\ldots,n$.

% For all $v,w\in R^{(k)}_0$,
%  \begin{align*}
%    \braket{\kappa_{R^{(i)}_0},\varepsilon_{(v,w)}}=0.
%  \end{align*}
%  Let $A$ be the intersection of 
%  $\kappa_{R^{(k)}_0}^\perp$ for all $k$.
%  Then $A$ is an affine space of dimension $c(Q)$,
%  and $A$ contains $\varepsilon_{(v,w)}$.
  
  If $R$ does not have a rank function,
  then $\DE(R)$ is a $c(R)$-dimensional polytope
  contained in $V_{R}$. In other words, $V_{R}$ is a supporting
 hyperplane of $\DE(R)$ in $V_{Q}$, which implies that
  $\varepsilon_{(v,w)}\in \DE(R)$. 
  
 Suppose that $R$ has a rank function.
 $\DE(R)$ is of dimension $c(R)-1$ by \cref{thm:dim}.
 Since $\DE(R)$ is a facet of $\DE(Q)$,
 \[
  \dim(\DE(Q)) = \dim(\DE(R))+1 = c(R) = c(Q)-1.
 \]
 By \cref{thm:dim}, $Q$ also has a rank function, which is denoted by  
 $\rho$.
 It defines a
 hyperplane 
 \[
  H_{\rho} = \rho^{\perp} + \varepsilon_{(v_{0},w_{0})}
 \]
 in $\RR^{Q_{0}}$ for some $(v_{0},w_{0})\in Q_{1}$.
 We may choose $(v_{0},w_{0})\in R_{1}$.
 By \cref{lemma:ocs:rank}, $\DE(R)$ is contained in $H_{\rho}$, hence in
 $H_{\rho}\cap V_{R}$.
 
 Since $\rho$ is a rank function on $Q$, we have
 $\varepsilon_{(v,w)}\in H_{\rho}$ for $(v,w)\in Q_{1}$.
 If $v,w\in R_{1}^{(k)}$ for some $k$, we also have
 $\varepsilon_{(v,w)}\in V_{R}$, and hence
 $\varepsilon_{(v,w)} \in H_{\rho}\cap V_{R}$.
 Note that $H_{\rho}$ and $V_{R}$ intersect transversally, and we have 
 \[
  \dim (H_{\rho}\cap V_{R}) = \dim(V_{R})-1 = c(R)-1=\dim(\DE(R)).
 \]
 In other words, $H_{\rho}\cap V_{R}$ is the affine hull of $\DE(R)$ in
 $H_{\rho}$ and it should be the supporting hyperplane of $\DE(R)$ in
 $H_{\rho}$, since $\DE(R)$ is a facet of $\DE(Q)$.
 It implies that $\varepsilon_{(v,w)}\in \DE(R)$.
\end{proof}

\begin{lemma}
\label{lem:facetisacyclic}
  Let $R$ be a subquiver of $Q$
  with $\numof{\pi_{0}(R)}=\numof{\pi_{0}(Q)}+1$.
  If $\DE(R)$ is a facet of $\DE(Q)$,
  then $Q/R$ is acyclic.
\end{lemma}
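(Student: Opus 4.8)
The plan is to argue by contradiction: suppose $\DE(R)$ is a facet but $Q/R$ contains a directed cycle, and use that cycle to produce a vertex $\varepsilon_{(v,w)}$ of $\DE(Q)$ which lies strictly on the "wrong side" of any supporting hyperplane that could cut out $\DE(R)$, contradicting that $\DE(R)$ is a facet.

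Since $c(R) = c(Q) - 1$, by \cref{lem:facetisfull} each connected component of $R$ is a full subquiver, so the contraction $Q/R$ is defined. By \cref{lemma:ocs:constant} we know $\DE(R) \subset V_R$, and $V_R$ is a hyperplane in $V_Q$ (a codimension-one affine subspace) by the coconnectivity count. First I would record, as in the proof of \cref{lem:facetisfull}, that since $\DE(R)$ is a facet of $\DE(Q)$ and $\dim \DE(R) = \dim V_R$ when $R$ has no rank function (resp.\ $\dim \DE(R) = \dim(H_\rho \cap V_R)$ when $R$ has a rank function $\rho$), the affine hull $A$ of $\DE(R)$ is exactly $V_R$ (resp.\ $H_\rho \cap V_R$), and $A$ is a supporting hyperplane of $\DE(Q)$. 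The key point is then: \emph{the whole quiver $Q$ lies weakly on one side of $A$}, i.e.\ there is $\delta \in \RR^{Q_0}$ with $\langle \delta, \varepsilon_{(v,w)} - \varepsilon_{(v_0,w_0)}\rangle \ge 0$ for all $(v,w) \in Q_1$ (with $(v_0,w_0) \in R_1$ fixed), and equality exactly for $(v,w) \in R_1$.

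Now suppose $Q/R$ has a directed cycle, coming from edges $(v_1,w_1),\ldots,(v_m,w_m) \in Q_1 \setminus R_1$ with $[w_t] = [v_{t+1}]$ for all $t$ (indices mod $m$). Because each component $R^{(k)}$ is full and connected, $w_t$ and $v_{t+1}$ lie in the same component of $R$, so $\varepsilon_{(w_t, v_{t+1})}$ lies in the vector space spanned by the edges of $R$, hence in $A - \varepsilon_{(v_0,w_0)}$ (the direction space of $A$), hence $\langle \delta, \varepsilon_{(w_t,v_{t+1})}\rangle = 0$. Summing, the "closed walk" telescopes: $\sum_{t=1}^m \big(\varepsilon_{(v_t,w_t)} + \varepsilon_{(w_t, v_{t+1})}\big) = 0$, so $\sum_{t=1}^m \varepsilon_{(v_t,w_t)} = -\sum_{t=1}^m \varepsilon_{(w_t,v_{t+1})}$. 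Pairing with $\delta$: $\sum_{t=1}^m \langle \delta, \varepsilon_{(v_t,w_t)} - \varepsilon_{(v_0,w_0)}\rangle = \sum_t \langle\delta, \varepsilon_{(v_t,w_t)}\rangle - m\langle\delta,\varepsilon_{(v_0,w_0)}\rangle$. Using the telescoping identity and $\langle\delta, \varepsilon_{(w_t,v_{t+1})}\rangle = 0$ together with $\langle\delta, \varepsilon_{(v_0,w_0)}\rangle$ being the common value (which I'll normalize so that $\langle \delta, \varepsilon_{(v,w)} - \varepsilon_{(v_0,w_0)}\rangle \ge 0$ means the functional $\langle\delta,-\rangle$ is maximized on $\DE(R)$), I get that the sum of the nonnegative quantities $\langle\delta, \varepsilon_{(v_t,w_t)} - \varepsilon_{(v_0,w_0)}\rangle$ equals $0$. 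Hence each term is $0$, forcing $(v_t,w_t) \in R_1$ for every $t$ — contradicting $(v_t,w_t) \in Q_1 \setminus R_1$. Therefore $Q/R$ is acyclic.

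\textbf{The main obstacle} I anticipate is the bookkeeping around the two cases (whether or not $R$, equivalently $Q$, has a rank function) and making the telescoping computation clean: one must be careful that $\varepsilon_{(w_t,v_{t+1})}$ genuinely lies in the \emph{direction space} of the affine hull of $\DE(R)$, which requires knowing that $\DE(R)$'s affine hull, translated to the origin, contains all differences of edge-vectors \emph{within} a single component of $R$ — this is exactly the content of \cref{lem:dim:forest} applied componentwise (a spanning polytree of $R^{(k)}$ has $\varepsilon$-vectors spanning the same space as all of $R^{(k)}$'s edges, and differences $\varepsilon_{(x,y)} - \varepsilon_{(x',y')}$ live in that span). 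Once that linear-algebra fact is in hand, the sign/telescoping argument is routine; I would also double-check the orientation convention so that "weakly one side" gives the inequality in the direction that makes the sum-of-nonnegatives-equals-zero step valid, possibly replacing $\delta$ by $-\delta$.
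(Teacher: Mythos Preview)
Your telescoping strategy is sound, but the key claim that $\varepsilon_{(w_t,v_{t+1})}$ lies in the direction space $A-\varepsilon_{(v_0,w_0)}$ is false in the rank-function case. There $A=H_\rho\cap V_R$, so the direction space is $\rho^\perp\cap V_R$; but $\varepsilon_{(w_t,v_{t+1})}\in\rho^\perp$ would force $\rho(w_t)=\rho(v_{t+1})$, which need not hold for two vertices merely in the same component of $R$. More concretely, writing $\varepsilon_{(w_t,v_{t+1})}$ as a signed sum $\sum_i s_i\varepsilon_{e_i}$ along an $R$-walk, the coefficients $s_i$ need not sum to zero, so this vector need not lie in the span of differences $\varepsilon_e-\varepsilon_{e'}$. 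Your appeal to \cref{lem:dim:forest} does not help: that lemma shows the linear span of $\{\varepsilon_e:e\in R_1\}$ has dimension $c(R)$, which is \emph{strictly larger} than $\dim(A-\varepsilon_{(v_0,w_0)})=c(R)-1$ in the rank-function case, confirming the inclusion fails. With a generic $\delta$ the computation actually yields $\sum_t\langle\delta,\varepsilon_{(v_t,w_t)}-\varepsilon_{(v_0,w_0)}\rangle=-m\langle\delta,\varepsilon_{(v_0,w_0)}\rangle$, and nothing forces this to vanish.

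The easy repair is to choose the supporting functional more carefully: take $\delta\in V_Q$ orthogonal to the linear subspace $V_R$ itself (one checks $V_R$ is a supporting hyperplane of $\DE(Q)$ in $V_Q$ in both cases, since $V_R\cap\aff(\DE(Q))=A$). Then $\langle\delta,\varepsilon_e\rangle=0$ for all $e\in R_1$, and $\varepsilon_{(w_t,v_{t+1})}\in V_R$ automatically, so your telescoping goes through. The paper does exactly this after first reducing to $Q$ connected with $R$ having two components $R^{(n-1)},R^{(n)}$: then $\delta$ is just $\kappa_{R^{(n-1)}_0}$ (restricted to $V_Q$), and a directed cycle in $Q/R$ immediately yields edges with $\langle\kappa_{R^{(n-1)}_0},\varepsilon_{(v,w)}\rangle=+1$ and $-1$, contradicting support --- no telescoping needed.
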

\begin{proof}
% By assumption, $\numof{\pi_{0}(R)}=\numof{\pi_{0}(Q)}+1$.
 Denote
 \begin{align*}
  \pi_{0}(Q) & =\{Q^{(1)},\ldots, Q^{(n-1)}\} \\
  \pi_{0}(R) & =\{R^{(1)},\ldots, R^{(n)}\}.
 \end{align*}
 Without loss of generality, we may assume that $R^{(i)}\subset Q^{(i)}$
 for $i=1,\ldots,n-2$ and $R^{(n-1)}\cup R^{(n)}\subset Q^{(n-1)}$.
 Let us denote $R'=R^{(n-1)}\cup R^{(n)}$ and $Q'=Q^{(n-1)}$.
 We should have
 $R^{(i)}=Q^{(i)}$ for $i=1,\ldots,n-2$ and $\DE(R')$
 is a facet of $\DE(Q')$, since $\dim(\DE(R))=\dim(\DE(Q))-1$.
 It implies that $Q/R$ is a union of $(n-2)$
 quivers consisting of a single vertex and
 $Q'/R'$.

 If $Q/R$ were to have a directed cycle, it should be contained in
 $Q'/R'$. By \cref{lem:facetisfull},
 both $R^{(n-1)}$ and $R^{(n)}$ are full subquivers of $Q'$.
 Since $R$ is lluf, such a directed cycle contains edges
 $(v,w),(v',w')\in Q'_{1}$ with $v,w'\in R^{(n-1)}_{0}$ and
 $w,v'\in R^{(n)}_{0}$.
 Then we have
  \begin{align}
   \Braket{\kappa_{R^{(n-1)}_0},\varepsilon_{(v,w)}} &=1 \label{align:1} \\
    \Braket{\kappa_{R^{(n-1)}_0},\varepsilon_{(v',w')}}
   &=-1. \label{align:-1} 
  \end{align}
% and the hyperplane $\kappa_{R^{(n-1)}_{0}}^{\perp}$ is not a supporting
% hyperplane of $\DE(R')$. 

 If $R'$ does not have a rank function, this contracts to the fact that    
 $V_{R'}=\kappa_{R^{(n-1)}_{0}}^{\perp}\cap\kappa_{R^{(n)}_{0}}^{\perp}$
 is a supporting hyperplane of $\DE(R')$ in $V_{Q'}$,
 as we have seen in the proof of
 \cref{lem:facetisfull}.

 If $R'$ has a rank function, so does $Q'$ as is shown in the proof of
 \cref{lem:facetisfull}. Let $\rho$ be a rank 
 function of $Q'$. Then, again by the proof of \cref{lem:facetisfull},
 $H_{\rho}\cap V_{R'}$ is a supporting hyperplane of $\DE(R')$, which
 contradicts to (\ref{align:1}) and (\ref{align:-1}). 

 Hence $Q/R$ is acyclic.
\end{proof}  

\begin{lemma}
 \label{lem:facetispiyo}
 Let $R$ be a lluf subquiver of $Q$ with
 $\numof{\pi_{0}(R)}=\numof{\pi_{0}(Q)}$. 
 If $\DE(R)$ is a facet of $\DE(Q)$, then
 the subquiver $R$ has a rank function
 $\rho\in \RR^{Q_0}$ such that
 \[
 (\rho(v)-\rho(w)+1)(\rho(v')-\rho(w')+1)> 0.
  \]
 for $(v,w),(v' ,w') \in Q_1\setminus R_1$ 
 and $Q$ does not have a rank function.
\end{lemma}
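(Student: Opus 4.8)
The plan is to use the dimension formula \cref{thm:dim} to force the structural facts, then to identify the hyperplane cutting out the facet as the one attached to a rank function of $R$, and finally to read the sign condition off that hyperplane. First, since $R_{0}=Q_{0}$ and $\numof{\pi_{0}(R)}=\numof{\pi_{0}(Q)}$, we have $c(R)=c(Q)$; moreover each connected component of $R$ is contained in one of $Q$, so equality of the numbers of components forces $\pi_{0}(R)$ and $\pi_{0}(Q)$ to induce the same partition of $Q_{0}$, and hence $V_{R}=V_{Q}$. If $Q$ had a rank function, \cref{thm:dim} would give $\dim\DE(Q)=c(Q)-1$ and then $\dim\DE(R)=c(Q)-2=c(R)-2$, contradicting \cref{lem:dim:lowerbound}. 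So $Q$ has no rank function --- this is the last assertion of the lemma --- and \cref{thm:dim} gives $\dim\DE(Q)=c(Q)$; hence $\dim\DE(R)=c(Q)-1=c(R)-1$ and, by \cref{thm:dim} again, $R$ must have a rank function $\rho\in\RR^{Q_{0}}$. Finally, $\DE(Q)\subset V_{Q}$ by \cref{lemma:ocs:constant} and $\dim\DE(Q)=c(Q)=\dim V_{Q}$, so $\aff(\DE(Q))=V_{Q}$.

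Next I would pin down the facet hyperplane. Fix $(v_{0},w_{0})\in R_{1}$ and let $H_{\rho}=\rho^{\perp}+\varepsilon_{(v_{0},w_{0})}=\{x:\Braket{\rho,x}=-1\}$ be the hyperplane of \cref{lemma:ocs:rank} applied to $R$; it contains $\DE(R)$. Since $0\in V_{Q}$ but $0\notin H_{\rho}$, the affine space $H_{\rho}\cap V_{Q}$ is a proper subspace of $V_{Q}$, so $\dim(H_{\rho}\cap V_{Q})\le c(Q)-1$; on the other hand it contains $\aff(\DE(R))$, which lies in $H_{\rho}\cap V_{R}=H_{\rho}\cap V_{Q}$ and has dimension $\dim\DE(R)=c(Q)-1$. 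Hence $\aff(\DE(R))=H_{\rho}\cap V_{Q}$. Since $\DE(R)$ is a proper face of $\DE(Q)$, it equals $\DE(Q)\cap\aff(\DE(R))$ (a facet is the polytope intersected with the affine hull of the facet), and combining this with $\DE(Q)\subset V_{Q}$ yields $\DE(R)=\DE(Q)\cap H_{\rho}$.

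Now the sign argument. Put $f(x)=\Braket{\rho,x}+1$, so $f(\varepsilon_{(v,w)})=\rho(v)-\rho(w)+1$ for $(v,w)\in Q_{0}\times Q_{0}$ and $\{f=0\}=H_{\rho}$, whence $\{x\in\DE(Q):f(x)=0\}=\DE(R)$. If $f$ attained both a positive and a negative value on $\DE(Q)$, then joining such a point to a point of the relative interior of $\DE(Q)$ would produce a relative-interior point of $\DE(Q)$ at which $f$ vanishes; but $\DE(R)$, being a proper face, meets no relative-interior point of $\DE(Q)$. So $f$ has constant weak sign on $\DE(Q)$, i.e.\ on every vertex $\varepsilon_{(v,w)}$ with $(v,w)\in Q_{1}$. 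For $(v,w)\in Q_{1}\setminus R_{1}$ we have $f(\varepsilon_{(v,w)})\neq 0$: otherwise $\varepsilon_{(v,w)}\in\DE(R)$, so it would be a vertex of $\DE(R)$ and $(v,w)\in R_{1}$ by \cref{lem:vertices}, a contradiction. Hence all the numbers $\rho(v)-\rho(w)+1$ with $(v,w)\in Q_{1}\setminus R_{1}$ are nonzero of the same sign, which is exactly the required inequality $(\rho(v)-\rho(w)+1)(\rho(v')-\rho(w')+1)>0$.

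I expect the middle step to be the crux: a priori the facet $\DE(R)$ is cut out by some unrelated supporting hyperplane, and the real work is showing that the hyperplane $H_{\rho}$ attached to the rank function $\rho$ of $R$ is precisely that hyperplane. The dimension computation $\aff(\DE(R))=H_{\rho}\cap V_{Q}$, together with $\aff(\DE(Q))=V_{Q}$ and the elementary identity that a facet equals the polytope intersected with the affine hull of the facet, is what settles this; once it is in hand the remaining sign argument is routine convexity.
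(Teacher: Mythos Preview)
Your proof is correct and follows essentially the same route as the paper's: use \cref{thm:dim} together with $c(R)=c(Q)$ to force that $Q$ has no rank function while $R$ does, then identify $H_{\rho}\cap V_{Q}$ with the affine hull of $\DE(R)$ by a dimension count, conclude it is the supporting hyperplane of the facet, and read off the sign condition (your relative-interior argument for the sign is a slightly more explicit variant of the paper's direct appeal to the supporting-hyperplane property).
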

\begin{proof}
 Since $c(R)=c(Q)$ and $\DE(R)$ is a facet of $\DE(Q)$,
 \[
  c(Q) \ge \dim(\DE(Q)) = \dim(\DE(R))+1 \ge c(R)=c(Q),
 \]
  $Q$ does not have a rank function and
  $R$ has a rank function by \cref{thm:dim}. We also have
 $\dim(\DE(Q))=c(R)$ and $\dim(\DE(R))=c(R)-1$.
  Since $\numof{\pi_{0}(Q)}=\numof{\pi_{0}(R)}$,
  $v$ and $w$ are in the same connected component of $R$
  for each $(v,w)\in Q_1$.
  Hence $V_{R}$ contains $\varepsilon_{(v,w)}$
  for all $(v,w)\in Q_1$. In other words, $\DE(Q)$ is a convex polytope
 in $V_{R}$.

 Let $\rho$ be a rank function of $R$.
 Fix $(v_0,w_0)\in R_1$ and consider the hyperplane
 $H_{\rho}=\rho^{\perp}+\varepsilon_{(v_{0},w_{0})}$.
 Then $H_{\rho}\cap V_{R}$ is an affine space of dimension $c(R)-1$
  which contains $\DE(R)$. It means that $H_{\rho}\cap V_{R}$ is a
 supporting hyperplane of $\DE(R)$ in $V_{R}$.
 Thus we have
    \begin{align*}
    \forall (v,w)\in Q_1,&\quad
    \Braket{\rho, \varepsilon_{(v,w)}-\varepsilon_{(v_0,w_0)}}=\rho(v)-\rho(w)+1\geq 0
    \intertext{or}
    \forall (v,w)\in Q_1,&\quad
    \Braket{\rho, \varepsilon_{(v,w)}-\varepsilon_{(v_0,w_0)}}=\rho(v)-\rho(w)+1\leq 0.
  \end{align*}

 It remains to show that these values are nonzero if 
 $(v,w)\in Q_{1}\setminus R_{1}$.
  If $\rho(v)-\rho(w)=1$,
  then $H_{\rho}\cap V_{R}$ contains $\varepsilon_{(v,w)}$.
  Since $\DE(R)$ is a facet  of $\DE(Q)$,
  it follows that
  $\DE(R)$ contains $\varepsilon_{(v,w)}$,
  which implies $(v,w)\in R_1$.
\end{proof}

Now we are ready to prove \cref{thm:facet,thm:face}.

\begin{proof}[Proof of \cref{thm:facet}]
 Let $R$ be a lluf subquiver of $Q$ with $\dim(\DE(R))=\dim(\DE(Q))-1$.
 
 Suppose that $\DE(R)$ is a facet of $\DE(Q)$. 
 By \cref{lem:coconnectivity}, $c(R)=c(Q)$  or $c(R)=c(Q)-1$.
 When $c(R)=c(Q)$, \cref{lem:facetispiyo} implies that the
 condition (\ref{condition:piyo}) in \cref{thm:facet} holds.
 If $c(R)=c(Q)-1$, the condition (\ref{condition:hogera}) in
 \cref{thm:facet} follows from  
 \cref{lem:facetisfull,lem:facetisacyclic}.  

 Conversely, if a lluf subquiver $R$ satisfies 
 (\ref{condition:hogera}) in \cref{thm:facet}, then $\DE(R)$ is a face
 of $\DE(Q)$ by \cref{lem:hogeraisface}. 
 If $R$ satisfies (\ref{condition:piyo}) in \cref{thm:facet}, $\DE(R)$
 is a face of $\DE(Q)$ by \cref{lem:piyoisface}. Hence, in both cases,
 $\DE(R)$ is a facet.  
\end{proof}

\begin{proof}[Proof of \cref{thm:face}]
 Suppose that $\DE(R)$ is a face of $\DE(Q)$. Then there exists a
 descending sequence of subquivers
%% \[
%%  R=Q^{(n)} \subset Q^{(n-1)} \subset \cdots \subset Q^{(1)}\subset
%%  Q^{(0)} =Q 
%% \]
 \[
 Q = Q^{(0)}
 \supset Q^{(1)}
 \supset \cdots
 \supset Q^{(n-1)}
 \supset  Q^{(n)}=R
\]
 such that $\DE(Q^{(i+1)})$ is a facet of $\DE(Q^{(i)})$ for each
 $i=0,\ldots,n-1$. 
 The rank function of $Q$ serves as a rank function of $Q^{(i)}$ and we have
 \[
  c(Q^{(i+1)})=\dim(\DE(Q^{(i+1)}))+1 = \dim(\DE(Q^{(i)}))-1+1=c(Q^{(i)})-1.
 \]
 by \cref{thm:dim}.
 It implies by \cref{thm:facet} that each connected component of
 $Q^{(i+1)}$ is a full subquiver of $Q^{(i)}$ and that
 $Q^{(i)}/Q^{(i+1)}$ is an acyclic quiver for all $i$.
 Hence
 each connected component of $Q^{(n)}=R$ is a full subquiver of $Q$
 and $Q^{(0)}/Q^{(n)}=Q/R$ is acyclic.

 The converse follows from \cref{lem:hogeraisface} .
\end{proof}

%% Assume that $\dim(\check P)+1$
%% Let $P(R)$ be a facet of $P(Q)$.
%% We have $\dim(P(R))=\dim(P(Q))-1$.
%% Since $\dim(P(R))\leq \rank(R)$ and $\rank(Q)-1\leq \dim(P(Q))$,
%% $\rank(R)\geq \rank(Q)-2$.
%% In the case where $\rank(R)= \rank(Q)-2$,
%% $\dim(P(R))=\dim(P(Q))-1$.
%% only if $\dim(P(R))=\rank(R)$ and $\dim(P(Q))=\rank(Q)-1$.
%% If $\dim(P(Q))=\rank(Q)-1$,
%% then $Q$ satisfies \cref{condition:fuga}.
%% Since $Q$ satisfies \cref{condition:fuga},
%% the subquiver $R$ of $Q$ also satisfies \cref{condition:fuga}.
%% Hence we have $\dim(P(R))=\rank(R)-1$.
%% Thus,
%% if  $P(R)$ is a facet of $P(Q)$,
%% then $\rank(R)$ is $\rank(Q)-1$ or $\rank(Q)$.
%% Therefore
%% \cref{thm:facet}
%% follows from
%% \cref{lem:piyoisface,lem:hogeraisface,lem:facetisacyclic,lem:facetisfull,lem:facetispiyo}.

\section{Examples}
\label{sec:examples}

Here we consider some special cases as applications of our results.
First we consider the case of an asymmetric quiver $Q$ with no
undirected closed walk. Namely
the underlying graph of $Q$ is a forest.
In this case, we have
\begin{align*}
 \numof{\ver(\DE(Q))} & = \numof{Q_1} \\
 & =\numof{Q_{0}}-\numof{\pi_{0}(Q)} \\
 & = c(Q). 
\end{align*}
On the other hand, $Q$ has a rank function and we have
$\dim(\DE(Q))=c(Q)-1$ by \cref{thm:dim}. 
Hence we have the following:
\begin{cor}
 \label{cor:acyclicgraph}
 Let $Q$ be an asymmetric quiver with $Q_1\neq \emptyset$.
  If the underlying graph of $Q$ is acyclic,
  then 
  $\DE(Q)$ is a simplex of dimension $\numof{Q_1}-1=c(Q)-1$.
\end{cor}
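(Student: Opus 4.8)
The plan is to assemble the statement entirely from results already in hand, so the argument is short. First I would count the vertices of $\DE(Q)$. By \cref{lem:vertices} the vertex set is $\Set{\varepsilon_{(v,w)}|(v,w)\in Q_{1}}$, and the assignment $(v,w)\mapsto\varepsilon_{(v,w)}=\kappa_{\set{v}}-\kappa_{\set{w}}$ is injective on $Q_{1}$: for $v\neq w$ the vector $\kappa_{\set{v}}-\kappa_{\set{w}}$ has its unique $+1$ entry at $v$ and its unique $-1$ entry at $w$, so it determines the ordered pair. Hence $\numof{\ver(\DE(Q))}=\numof{Q_{1}}$. Since the underlying graph of $Q$ is acyclic it is a forest, and since $Q$ is asymmetric that forest is simple, with exactly $\numof{Q_{1}}$ edges; a forest has (number of vertices) $-$ (number of components) edges, and as a quiver is connected iff its underlying graph is, this gives $\numof{Q_{1}}=\numof{Q_{0}}-\numof{\pi_{0}(Q)}=c(Q)$.

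Next I would compute $\dim\DE(Q)$. An acyclic underlying graph has no undirected cycle whatsoever, so the cycle condition in \cref{prop:fuga} is satisfied vacuously; together with the hypothesis that $Q$ is asymmetric this shows $Q$ has a rank function. Therefore \cref{thm:dim} yields $\dim\DE(Q)=\numof{Q_{0}}-\numof{\pi_{0}(Q)}-1=c(Q)-1=\numof{Q_{1}}-1$. (Equivalently, one may invoke \cref{lem:dim:forest} with $F=Q$, which gives the same value and moreover records that the $\numof{Q_{1}}$ points $\varepsilon_{(v,w)}$ affinely span a space of dimension $\numof{Q_{1}}-1$.)

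Finally, combining the two computations, $\DE(Q)$ is a polytope of dimension $d:=\numof{Q_{1}}-1$ whose vertex set has exactly $d+1$ elements. A $d$-dimensional polytope with $d+1$ vertices is a $d$-simplex: its vertices are $d+1$ points affinely spanning a $d$-dimensional affine space, hence affinely independent, so its convex hull is a simplex. This finishes the proof. There is no serious obstacle here; the only points that merit a sentence of care are the injectivity of $(v,w)\mapsto\varepsilon_{(v,w)}$ (so that vertices are not over-counted), the elementary forest identity (edges $=$ vertices $-$ components), and the standard fact that a $d$-polytope with the minimal possible number $d+1$ of vertices is a simplex.
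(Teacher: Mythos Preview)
Your proof is correct and follows essentially the same approach as the paper's: count the vertices as $\numof{Q_{1}}=c(Q)$ via the forest identity, compute $\dim\DE(Q)=c(Q)-1$ using the existence of a rank function and \cref{thm:dim}, and conclude that a $d$-polytope with $d+1$ vertices is a simplex. You simply supply more justification (injectivity of $(v,w)\mapsto\varepsilon_{(v,w)}$ and the appeal to \cref{prop:fuga} for the rank function) than the paper spells out.
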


\begin{example}
 Let $Q$ be an asymmetric quiver whose underlying graph is the Dynkin
 graph $A_{n+1}$.
 Denote $Q_0=\Set{0,1,\ldots,n}$ and
 $Q_1=\Set{e_1,\ldots,e_n}$ so that $e_{i}=(i-1,i)$ or $(i,i-1)$ for
 $i=1,\ldots,n$. 
 Then the directed edge polytope $\DE(Q)$ is
 an $(n-1)$-simplex.
 Hence $\DE(R)$ is a face for any luff proper subquiver $R$
 of $Q$.
\end{example}

One of the simplest cases in which the underlying graph is not a tree is
the following. 

\begin{example}
 \label{example:asymmetriccycle}
 Let $Q$ be an asymmetric quiver whose underlying graph is
 the boundary of a $2n$-gon. Denote
 $Q_0=\ZZ/2n\ZZ=\Set{\overline{1},\ldots,\overline{2n}=\overline{0}}$
 and $Q_{1}=\Set{e_{1},\ldots,e_{2n}}$ so that
 $e_{i}=(\overline{i-1},\overline{i})$ or
 $(\overline{i},\overline{i-1})$ for each $i\in\Set{1,\ldots,2n}$.

 Define $Q^{+}_1$ and $Q^{-}_1$ by 
 \begin{align*}
  Q^{+}_1&=Q_1\cap \Set{(\overline{i-1},\overline{i})|\overline{i}\in Q_0},\\
  Q^{-}_1&=Q_1\cap \Set{(\overline{i},\overline{i-1})|\overline{i}\in Q_0}.
 \end{align*}
 If $\numof{Q^{+}_1}=\numof{Q^{-}_1}=n$, 
 then $Q$ has a rank function.
 By \cref{thm:dim},
 \[
  \dim(\DE(Q)) = c(Q)-1 = \numof{Q_{0}}-\numof{\pi_{0}(Q)} -1 = 2n-2.
 \]
 Since $\numof{\ver(\DE(Q))}=\numof{Q_{1}}=2n$, $\DE(Q)$ is not a
 simplex. 

 Let $R$ be a lluf subquiver of $Q$ whose directed edge polytope
 $\DE(R)$ is a facet of $\DE(Q)$. Since $R$ also has a rank function,
 \[
 2n-3=\dim(\DE(R)) = \numof{Q_{0}}-\numof{\pi_{0}(R)}-1
 =2n-1-\numof{\pi_{0}(R)}, 
 \]
 which implies that $Q_{1}\setminus R_{1}$ consists of two disjoint
 edges. Let $Q_1\setminus R_1 = \Set{e',e''}$. 
 The acyclicity of $Q/R$ following from \cref{thm:face} allows us to
 assume that $e'\in Q^{+}_{1}$ and $e''\in Q^{-}_{1}$.
 This is a characterization of facets of $\DE(Q)$.
 
 For such a subquiver $R$, $\DE(R)$ is a simplex
 of dimension $2n-3$ by \cref{cor:acyclicgraph}.
 Since faces of a simplex are in one-to-one correspondence to subsets of
 the vertex set, for a lluf subquiver $S$ of $Q$,
  $\DE(S)$ is a proper face of $\DE(Q)$ of dimension $d$
 if and only if $\numof{S_1\cap Q^{+}_1}<n$,
 $\numof{S_1\cap Q^{-}_1}<n$, and $\numof{S_1}=d+1$.
 Hence the number $f_d$ of faces of $\DE(Q)$ of dimension $d$ is given
 by 
 \begin{align*}
  f_d=\binom{2n}{d+1}-2\binom{n}{d+1-n},
 \end{align*}
  where the binomial coefficient $\binom{m}{k}$ equals $0$ if $m<k$ or
 $k<0$. 
\end{example}

Finally we consider the case of symmetric edge polytopes.
For a finite graph $G$, the symmetric edge polytope $\SE(G)$ of $G$
introduced by Matsui et al.~\cite{MR2842918} is, by definition, the
directed edge polytope $\DE(D(G))$ of the double $D(G)$ of $G$.
Note that any symmetric quiver in our sense is of the form $D(G)$ for a
finite graph $G$.
 
Since $D(G)$ is a symmetric quiver, $D(G)/R$ is not acyclic for any
proper subquiver $R$ of $D(G)$.
Hence we have the following:
\begin{cor}
 \label{cor:SE}
 Let $G$ be a finite simple graph whose vertex set is denoted by $G_{0}$.
 For a lluf subquiver $R$ of $D(G)$ with $\dim(\DE(R))=\dim(\SE(G))-1$,
 the following are equivalent:
 \begin{enumerate}
  \item $\DE(R)$ is a facet of $\SE(G)$.
  \item $c(R)=c(D(G))$ and there
	exists a rank function $\rho$ of $R$ such that
	\[
	(\rho(v)-\rho(w)+1)(\rho(v')-\rho(w')+1)>0
	\]
	for any $(v,w),(v',w')\in D(G)_{1}\setminus R_{1}$.
  \item \label{SE3}
	$c(R)=c(D(G))$ and there exists a function
	$\rho\in \RR^{G_{0}}$ such that
	\[
	\rho(v)-\rho(w) =
	\begin{cases}
	 1 & ((v,w)\in R_{1}) \\
	 -1 & ((w,v)\in R_{1}) \\
	 0 & (\text{otherwise})
	\end{cases}
	\]
	for $(v,w)\in D(G)_{1}$.
 \end{enumerate}
\end{cor}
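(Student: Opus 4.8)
The plan is to derive \cref{cor:SE} from \cref{thm:facet} applied to the symmetric quiver $Q=D(G)$, followed by a short bookkeeping argument translating between the two forms of the rank-function condition.

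First I would establish the equivalence of (1) and (2). Since $\DE(D(G))=\SE(G)$ and the hypothesis $\dim(\DE(R))=\dim(\SE(G))-1$ is exactly the standing hypothesis of \cref{thm:facet}, that theorem says $\DE(R)$ is a facet iff condition~(\ref{condition:hogera}) or condition~(\ref{condition:piyo}) of \cref{thm:facet} holds. I claim (\ref{condition:hogera}) can never hold when $Q$ is symmetric. Indeed, suppose $R$ is a proper lluf subquiver all of whose connected components are full subquivers of $Q$, and pick $(v,w)\in Q_1\setminus R_1$; fullness forces $v$ and $w$ to lie in distinct components of $R$, so the reversed edge $(w,v)$, which lies in $Q_1$ by symmetry, cannot lie in $R_1$ either, and hence $([v],[w])$ and $([w],[v])$ are both edges of $Q/R$, forming a directed $2$-cycle. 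So $Q/R$ is not acyclic, and (\ref{condition:hogera}) fails. Consequently $\DE(R)$ is a facet iff (\ref{condition:piyo}) holds; and since $R$ is lluf, $\numof{\pi_0(R)}=\numof{\pi_0(D(G))}$ is equivalent to $c(R)=c(D(G))$, so (\ref{condition:piyo}) is verbatim condition (2).

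For the equivalence of (2) and (3) — the ``immediate'' translation — note that the clause $c(R)=c(D(G))$ occurs in both, so only the rank-function clauses need matching; both concern functions in $\RR^{G_0}=\RR^{D(G)_0}$, and the correspondence is $\rho\mapsto-\rho$. Given $\rho$ as in (3), the relation $\rho(v)-\rho(w)=1$ for $(v,w)\in R_1$ says precisely that $-\rho$ is a rank function of $R$, and for $(v,w)\in D(G)_1\setminus R_1$ one checks that $-\rho(v)+\rho(w)+1$ equals $2$ if $(w,v)\in R_1$ and $1$ otherwise; all of these are positive, so (2) holds. Conversely, given a rank function $\sigma$ of $R$ with the positivity property of (2), put $\rho=-\sigma$: the defining relation of a rank function gives $\rho(v)-\rho(w)=1$ on $R_1$ and $=-1$ on the reverses of edges of $R_1$, so the only thing left to verify is $\rho(v)-\rho(w)=0$ whenever neither $(v,w)$ nor $(w,v)$ lies in $R_1$. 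For such an edge both $(v,w)$ and $(w,v)$ lie in $D(G)_1\setminus R_1$, so applying the positivity of (2) to that pair yields $(1+t)(1-t)>0$ with $t=\sigma(v)-\sigma(w)$, whence $|t|<1$; and since $v$ and $w$ are adjacent in $G$ while $c(R)=c(D(G))$ together with $R$ being lluf force $\pi_0(R)=\pi_0(D(G))$, the vertices $v$ and $w$ lie in a common connected component of $R$, on which any rank function takes values in a single coset of $\ZZ$; hence $t\in\ZZ$, and therefore $t=0$.

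The only genuinely non-routine point is this last sharpening: condition~(\ref{condition:piyo}) a priori only gives the weak bound $|t|<1$, and it upgrades to $t=0$ precisely because the hypothesis $c(R)=c(D(G))$ places both endpoints of every edge of $D(G)$ into one connected component of $R$, where the rank function is integer-valued. Everything else is sign bookkeeping and unwinding the definitions of symmetric quiver, lluf subquiver, and rank function.
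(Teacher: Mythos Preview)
Your proof is correct and follows the paper's approach: rule out condition~(\ref{condition:hogera}) of \cref{thm:facet} using the symmetry of $D(G)$, then translate condition~(\ref{condition:piyo}) into the form~(\ref{SE3}). The paper dispatches the equivalence of (2) and (3) as ``easily seen,'' whereas you supply the details, including the sign swap $\rho\mapsto -\rho$ and the observation that $c(R)=c(D(G))$ forces the endpoints of any edge of $D(G)$ into a common component of $R$, making $\sigma(v)-\sigma(w)$ an integer and hence forcing $t=0$ from $|t|<1$.
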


 \begin{proof}
  The equivalence of first two conditions follows from
  \cref{thm:facet}. The third condition is easily seen to be equivalent
  to the second condition.
 \end{proof}

\begin{remark}
 In \cite{MR3949939},
 Higashitani, Jochemko and Mateusz
 obtained a characterization of facets of symmetric edge
 polytopes for a connected simple graph $G$ as the existence of a
 function $\rho: G_{0}\to\ZZ$ satisfying the following two conditions:
 \begin{enumerate}
  \item $\rho(v)-\rho(w)\in \Set{-1,0,1}$ for any edge $(v,w)\in D(G)$, and
  \item the underlying graph of the quiver $E^{\rho}$ defined by
	\[
	 E^{\rho}_{1} = \Set{(v,w)\in \SE(G)_{1} | \rho(v)=\rho(w)+1}
	\]
	is a spanning subgraph of $G$.
 \end{enumerate}
 
 Their characterization can be obtained from \cref{cor:SE} as follows. 
 Let $\rho\in\RR^{G_{0}}$ be a function satisfying the third condition
 of \cref{cor:SE}. 
 Then $\rho(v)-\rho(w)\in \Set{-1,0,1}$ for all $(v,w)\in Q_1$ and we
 may assume that $\rho$ takes values in $\ZZ$.
 For such a function $\rho$, the quiver $E^{\rho}$ can be easily seen to
 coincide with our
 subquiver $R$ in \cref{cor:SE}.
 Since $G$ is connected, so is $\SE(G)$. The condition $c(R)=c(D(G))$
 implies that $R$ is also connected. Since $R$ is a lluf subquiver, it
 implies that the underlying graph of $E^{\rho}=R$ is a spanning
 subgraph of $G$.
  
 Conversely, let $\rho:G_{0}\to\ZZ$ be a function satisfying
 $\rho(v)-\rho(w)\in \Set{-1,0,1}$ for all $(v,w)\in Q_1$ and suppose
 that the underlying graph of $E^{\rho}$ is a spanning subgraph of $G$.
 In particular, $E^{\rho}$ is a lluf subquiver of $D(G)$.
 By the connectivity of $E^{\rho}$, we see $c(E^{\rho})=c(D(G))$.
 Since $\rho$ is a rank function on $E^{\rho}$, it satisfies the
 condition (a) of the third condition of \cref{cor:SE}.
 The condition that $\rho(v)-\rho(w)\in\Set{-1,0,1}$ for
 $(v,w)\in D(G)$, then, implies the condition (b).
\end{remark}

The following two examples were first studied by the second author in an 
elementary method analogous to that of Cho's \cite{MR1697418}, whose
analysis led to the current work.

\begin{example}
 \label{example:evencycle}
 Let $C_{2n}$ be the boundary of $2n$-gon regarded as a graph of $2n$
 vertices and edges. As is the case of \cref{example:asymmetriccycle},
 the vertex set is identified with
 $\ZZ/2n\ZZ=\Set{\overline{1},\ldots,\overline{2n}=\overline{0}}$. 
 The symmetric edge polytope $\SE(C_{2n})=\DE(D(C_{2n}))$ is a
 $(2n-1)$-dimensional polytope by \cref{thm:dim}.
 The faces of $\SE(C_{2n})$ can be determined as follows.
 
 Denote $Q=D(C_{2n})$ for simplicity and define
 \begin{align*}
  Q^{+}_1&=\Set{(\overline{i-1},\overline{i})| i=1,\ldots,2n},\\
  Q^{-}_1&=\Set{(\overline{i},\overline{i-1})| i=1,\ldots,2n}
 \end{align*}
 so that $Q_{1}=Q_{1}^{+}\cup Q_{1}^{-}$.

 By (\ref{SE3}) of \cref{cor:SE}, for a lluf subquiver $R$ of
 $Q$ with
 \[
 \dim(\DE(R))=\dim(\SE(C_{2n}))-1=2n-2,  
 \]
 $\DE(R)$ is a facet
 of $\SE(C_{2n})$ if and only if 
 $c(R)=c(Q)=2n-1$ and
 there exists a function $\rho:\ZZ/2n\ZZ\to\RR$ such that
 \[
  \rho(\overline{i-1})-\rho(\overline{i}) =
 \begin{cases}
  1 & ((\overline{i-1},\overline{i})\in R_{1}) \\
  -1 & ((\overline{i},\overline{i-1})\in R_{1}) \\
  0 & (\text{otherwise}),
 \end{cases}
 \]
 which implies that only one of $(\overline{i-1},\overline{i})$ or
 $(\overline{i},\overline{i-1})$ belongs to $R_{1}$ for each $i$.
 Denote
 \begin{align*}
  R^{+}_{1} & = R_{1}\cap Q^{+}_{1} =
  \Set{(\overline{i-1},\overline{i}) | i\in I_{+}} \\ 
  R^{-}_{1} & = R_{1}\cap Q^{-}_{1} =
  \Set{(\overline{i},\overline{i-1}) | i\in I_{-}}. 
 \end{align*}
 Then
 \begin{align*}
  0 & = \sum_{i=1}^{2n} (\rho(\overline{i-1}) -\rho(\overline{i})) \\
  & = \sum_{i\in I_{+}} 1 + \sum_{i\in I_{-}}(-1) + \sum_{i\not\in
  I_{+}\cup I_{-}} 0 \\
  & = \numof{I_{+}} - \numof{I_{-}}
 \end{align*}
 and we have $\numof{I_{+}}=\numof{I_{-}}$.
 
 Since $\DE(R)$ is of dimension $2n-2$,
 \[
 \numof{R_{1}} = \numof{\ver(\DE(R))} \ge 2n-1.
 \]
 By the condition on $\rho$, we see that the underlying graph of $R$
 must be the whole $C_{2n}$.
 Thus we have $\numof{R^{+}_1}=\numof{R^{-}_1}=n$ and 
 $R^{+}_{1}\cap (-R^{-}_{1})=\emptyset$, where
 $-R^{-}_{1}=\Set{(\overline{i-1},\overline{i})|(\overline{i},\overline{i-1})\in
 R^{-}_{1}}$.
 In other words, facets of $\SE(C_{2n})$ are in bijective correspondence
 to subsets of cardinality $n$ in
 $Q_{1}=\Set{(\overline{i-1},\overline{i}),(\overline{i},\overline{i-1})
 | i=1,\ldots,2n}$.
 Hence the number $f_{2n-2}$ of facets of $\SE(C_{2n})$
 is given by
  \begin{align*}
    f_{2n-2} = \binom{2n}{n}.
  \end{align*}
 
 Note that facets of $\SE(C_{2n})$ are polytopes in
 \cref{example:asymmetriccycle}. 
 In particular, faces of codimension $2$ in $\SE(C_{2n})$ are simplices
 of dimension $(2n-3)$,
 which means that all faces of $\SE(C_{2n})$ except for facets are
 simplices. 
 In other words, for $d<2n-2$ and a lluf subquiver $R$ of $Q$,
 $\DE(R)$ is a face of dimension $d$ in $\SE(C_{2n})$
 if and only if $\numof{R_1}=d+1$,
 $\numof{R_1\cap Q^{+}_1}<n$,
 $\numof{R_1\cap Q^{-}_1}<n$, and $R^{+}_{1}\cap (-R^{-}_{1})=\emptyset$.
 Hence the number $f_d$ of faces of $\DE(Q)$ of dimension $d$ is given by
  \begin{align*}
    f_d&=\sum_{i\in I}\binom{2n}{i}\binom{2n-i}{d+1-i}\\
    &=\binom{2n}{d+1}\sum_{i\in I}\binom{d+1}{i},
  \end{align*}
  where $I=\Set{i\in \ZZ | i<n, d+1-i<n}$.
  If $d+1<n$,
  then we have $\sum_{i\in I}\binom{d+1}{i}=2^{d+1}$,
  which implies
  \begin{align*}
    f_d &=\binom{2n}{d+1}2^{d+1}.
  \end{align*}

 We remark that D'Ali, Delucchi, and Micha{\l}ek \cite{1910.05193} also
 performed the same computation based on the characterization of facets
 by Higashitani et al.~\cite{MR3949939}.
\end{example}

\begin{example}
 \label{example:oddcycle}
 Consider the case of an odd cycle $C_{2n+1}$. As is the case of
 \cref{example:evencycle}, we identify the vertex set with
 $\ZZ/(2n+1)\ZZ=\Set{\overline{1},\ldots,\overline{2n},\overline{2n+1}=0}$.
 For simplicity, we denote $Q=D(C_{2n+1})$ and 
 \begin{align*}
  Q^{+}_1&=\Set{(\overline{i-1},\overline{i})| i=1,\ldots,2n+1},\\
  Q^{-}_1&=\Set{(\overline{i},\overline{i-1})| i=1,\ldots,2n+1}.
 \end{align*}
 The symmetric edge polytope $\SE(C_{2n+1})=\DE(Q)$ is a
 $2n$-dimensional polytope by \cref{thm:dim}. 
 
 For a lluf subquiver $R$ of $Q$, suppose that $\dim(\DE(R))=2n-1$.
 By the same argument as in \cref{example:evencycle},
 $\DE(R)$ is a facet of $\SE(C_{2n+1})$ if and only if
 $c(R)=c(Q)$, 
 $\numof{R^{+}_1}=\numof{R^{-}_1}=n$,
 and $R^{+}_1\cap (-R^{-}_{1})=\emptyset$.
 Since $\numof{\ver(\DE(R))}=2n$, $\DE(R)$ is a simplex of dimension
 $2n-1$ and all faces of $\SE(C_{2n+1})$ are simplices.
 We also see that facets are in one-to-one correspondence to a pair
 $(E,e)$ of a subset $E$ of $Q^{+}_{1}$ of cardinality $n$ and an
 element $e\in Q^{-}_{1}\setminus(-E)$ and the number $f_{2n-1}$ of
 facets of $\DE(Q)$ 
  is given by
  \begin{align*}
    f_{2n-1} = (n+1)\binom{2n+1}{n} = \frac{(2n+1)!}{n!n!}=
   (2n+1)\binom{2n}{n}. 
  \end{align*}
 
 Thus, for $d<2n-2$ and a lluf subquiver $R$ of $Q$,
 $\DE(R)$ is a face of $\SE(C_{2n+1})$ of dimension $d$
 if and only if $\numof{R_1}=d+1$,
 $\numof{R^{+}_1}<n$, $\numof{R^{-}_1}<n$, and 
 $R^{+}_1\cap (-R^{-}_{1})=\emptyset$.
 Hence the number $f_d$ of faces of dimension $d$ in $\SE(C_{2n+1})$ is
 given by 
  \begin{align*}
    f_d&=\sum_{i\in I}\binom{2n+1}{i}\binom{2n+1-i}{d+1-i}\\
    &=\binom{2n+1}{d+1}\sum_{i\in I}\binom{d+1}{i},
  \end{align*}
  where $I=\Set{i\in \ZZ | i<n, d+1-i<n}$.
  If $d+1<n$,
  then we have $\sum_{i\in I}\binom{d+1}{i}=2^{d+1}$,
  which implies
  \begin{align*}
    f_d &=\binom{2n+1}{d+1}2^{d+1}.
  \end{align*}
\end{example}

% References
\bibliography{by-mr,by-arxiv}
\bibliographystyle{amsplain-url}

\end{document}